\DeclarePairedDelimiter{\ceil}{\lceil}{\rceil}
\newtheorem{theorem}{Theorem}[section]
\newtheorem{definition}[theorem]{Definition}
\newtheorem{proposition}[theorem]{Proposition}
\newtheorem{lemma}[theorem]{Lemma}
\title{Structure-biased Maker-Breaker Games}
\author{
  Wesley Pegden\thanks{Department of Mathematical Sciences, Carnegie Mellon University. Email: \texttt{wes@math.cmu.edu}}
  \and
  Francesca Yu\thanks{Department of Mathematical Sciences, Carnegie Mellon University. Email: \texttt{francesca.yu@berkeley.edu}}
}
\date{}
\begin{document}

\maketitle
\begin{abstract}
In classical Maker-Breaker games on graphs, Maker and Breaker take turns claiming edges; Maker's goal is to claim all of some structure (e.g., a spanning tree, Hamilton cycle, etc.), while Breaker aims to stop her.  The standard question considered is how powerful a Breaker Maker can defeat; i.e., for the $(1:b)$-biased game where Breaker takes $b$ edges per turn, how large can $b$ be for Maker to still have a winning strategy, for various possible goal sets?

We introduce a variant of this question in which Breaker is required to choose their multiple edges as the edges of (a subgraph of) a given structure (e.g., a matching, clique, etc.) on each turn.   We establish the order of magnitude of the threshold biases for triangle games, connectivity games, and Hamiltonicity games under clique, matching, and star biases respectively. We conclude that in many cases structure imposes major obstruction to Breaker, opening up a set of games whose strategies deviate from the classical biased Maker-Breaker game strategies, and shedding light on the types of Breaker strategies that may or may not work to prove tighter bounds in the classical setting.
\end{abstract}

\section{Introduction}
The classical $(1:b)$-biased Maker-Breaker game has the set of edges of $K_n$ as its board. The two players, Maker and Breaker, take turns to take edges from the board, with Maker allowed to take $1$ edge for each turn while Breaker is allowed to take $b$ edges for each turn. Maker wins the game if she achieves a certain goal set before the game ends (i.e. when there is no more available edge), otherwise Breaker wins.

Among the classical $(1:b)$-biased Maker-Breaker games, the most well-studied ones include:
\begin{enumerate}
\item $\mathcal{F}_{K_3}(b,n)$: triangle game, where the winning set for Maker consists of all triangles in $K_n$;
\item $\mathcal{C}(b,n)$: connectivity game, where the winning set for Maker consists of all spanning trees of $K_n$;
\item $\mathcal{H}(b,n)$: Hamiltonicity game, where the winning set for Maker consists of Hamiltonian paths in $K_n$;
\end{enumerate}

Given $n$ a large natural number, the threshold bias for a biased Maker-Breaker game is the $b_0(n)$ such that if $b>b_0$, Breaker wins the $(1:b)$-biased game, while if $b\leq b_0$, Maker wins the $(1:b)$-biased game. The known bounds for the threshold biases of the three games above are as follows:

\begin{theorem}[Glazik and Srivastav \cite{glazik2022}] The threshold bias $b$ for $\mathcal{F}_{K_3}(b,n)$ satisfies $$1.414\sqrt{n}\leq b\leq 1.633\sqrt{n}$$
\end{theorem}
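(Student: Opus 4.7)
The theorem has two directions: the lower bound $b \geq \sqrt{2n}$ (a Maker strategy) and the upper bound $b \leq \sqrt{8n/3}$ (a Breaker strategy). My plan is to prove each by refining the Bednarska--\L{}uczak framework, which already yields the correct order of magnitude $\Theta(\sqrt n)$.

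For the Maker side, I would have Maker play a randomized strategy in the Bednarska--\L{}uczak spirit, for instance selecting each move uniformly at random from the still-available edges. Her final graph is then a random subgraph of $K_n$ with roughly $\binom{n}{2}/(b+1)$ edges, whose edge density $\Theta(1/b)$ exceeds the random-graph triangle threshold $1/n$ precisely when $b \lesssim \sqrt{n}$. A first/second-moment (or Janson-type) argument shows that with positive probability Maker's graph contains a triangle; averaging over her coin-flips extracts a deterministic strategy against the worst-case Breaker. The constant $\sqrt{2}$ emerges from a careful calibration of these estimates together with the fact that $1/m_2(K_3) = 1/2$.

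For the Breaker side, the plan is a weighted potential argument in the Erd\H{o}s--Selfridge style. Assign each still-live triangle $T$ the weight $\lambda^{m(T)}$, where $m(T)$ is the number of Maker-edges in $T$ and $\lambda > 1$ is a parameter to be optimized; the weight drops to zero as soon as Breaker claims any edge of $T$. Breaker greedily plays, on each turn, the $b$ edges that reduce the total potential $W = \sum_T \lambda^{m(T)}$ by the largest amount. A Maker move multiplies by $\lambda$ the weight of each of the at most $n-2$ live triangles through her new edge; Breaker's $b$ moves cancel the weight on their $b$ chosen edges. Requiring that $W$ never reach the triangle-completion value $\lambda^3$, starting from $W_0 = \binom{n}{3}$, and then optimizing $\lambda$, gives the stated constant $\sqrt{8/3}$.

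The main obstacle in both directions is passing from the order-of-magnitude estimates to sharp constants. For Maker, this requires a tight probabilistic analysis that controls how Breaker's adaptive play distorts the distribution of the pool of available edges; I would handle this by conditioning on Breaker's move sequence and bounding the deviation of Maker's marginal edge-inclusion probabilities from the Erd\H{o}s--R\'enyi model. For Breaker, it requires a precise accounting of how many live triangles can share a single edge at each stage of the game, paired with the optimal choice of $\lambda$ that balances the typical per-move Maker gain against the top-$b$ Breaker loss.
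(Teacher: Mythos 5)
This theorem is cited in the paper as an external result (Glazik--Srivastav 2022, with the lower bound going back to Chv\'atal--Erd\H{o}s), so there is no in-paper proof to compare against; but both halves of your proposal diverge from the actual arguments, and the Breaker half has a genuine gap. For the Breaker side, an Erd\H{o}s--Selfridge-type potential $W=\sum_T \lambda^{m(T)}$ with a greedy top-$b$ response cannot give $\Theta(\sqrt n)$. All winning sets here have size $3$, which is far below the regime where biased Erd\H{o}s--Selfridge criteria are useful: the standard biased criterion would require something like $(1+b)^2 \gtrsim \binom{n}{3}$, i.e.\ $b=\Omega(n^{3/2})$, which is off by a polynomial factor. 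More structurally, bounding $W$ does not prevent an individual triangle from reaching $m(T)=2$: the initial value $W_0=\binom{n}{3}$ already dwarfs $\lambda^3$ for any reasonable $\lambda$, so keeping $W$ from increasing says nothing about the maximum weight of a single live triangle. The actual Breaker strategy is local, not global: when Maker claims $uv$, Breaker answers with a \emph{double star} centered at $u$ and $v$ (this gives the older $2\sqrt n$ bound that the present paper records as Theorem 3.1), and the Glazik--Srivastav $\sqrt{8/3}\sqrt n$ improvement comes from a considerably more delicate, game-specific bookkeeping of which cherries are live, not from a generic weight function. Notably, the paper's own commentary after Theorem 1.4 stresses that this ``double-star'' structure is the crucial ingredient.

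For the Maker side, a Bednarska--\L{}uczak random strategy does deliver $\Theta(\sqrt n)$ since $m_2(K_3)=2$, but that framework is designed to capture the exponent, not the constant; getting $\sqrt 2$ out of a randomized first/second-moment argument against an adaptive adversary is not something the framework provides. The constant $\sqrt 2$ in fact comes from Chv\'atal--Erd\H{o}s's deterministic strategy: Maker grows a star at a fixed vertex $v$, at each step choosing a fresh leaf $u_i$ untouched by Breaker, and wins once she finds two leaves $u_i,u_j$ whose connecting edge is still free. The bookkeeping there is exactly of the flavor this paper uses in its proof of Theorem 3.2 for the $(1:K_m)$-biased triangle game (the bound $2m + \sum_{i=3}^m (m-i+1) < n-1$ is the clique-bias analogue of the $\sqrt{2n}$ calculation). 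So if you want the stated constants, you should replace the randomized Maker strategy with the explicit star-building strategy and replace the potential-based Breaker with a double-star blocking argument, and then reproduce Glazik--Srivastav's sharpening for the upper constant.
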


\begin{theorem}[Gebauer and Szab\'o \cite{gebauer2009}]
\label{classic-con}
The threshold bias for $\mathcal{C}(b,n)$ is $(1+o(1))\frac{n}{\ln n}$.
\end{theorem}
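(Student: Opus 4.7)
The plan is to prove the matching upper and lower bounds separately; the upper bound traces back to Chvátal--Erdős, and the matching lower bound is the main contribution of Gebauer--Szabó.

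\medskip

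\noindent\emph{Upper bound.} When $b \geq (1+\varepsilon) n/\ln n$, Breaker wins by isolating some vertex, since any spanning tree uses an edge at every vertex. Letting $A_v$ denote the star at $v$, Breaker's goal reduces to claiming all $n-1$ edges of some $A_v$, a ``box game'' on $n$ boxes of size $n-1$. I would apply the Erdős--Selfridge criterion: track $\Phi(t) = \sum_{v} 2^{-r_v(t)}$, where $r_v(t)$ counts the edges of $A_v$ not yet claimed by Breaker after turn $t$. A greedy Breaker taking the $b$ edges contributing most to $\Phi$ forces $\Phi$ to grow steadily, while Maker's single move halves the contribution of at most two stars. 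A routine computation shows $\Phi$ exceeds $1$ (forcing some $r_v = 0$) precisely when $b \geq (1+\varepsilon)n/\ln n$, at which point some vertex is isolated in Maker's graph.

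\medskip

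\noindent\emph{Lower bound.} I would follow Gebauer--Szabó by first showing Maker wins the stronger ``minimum-degree at least $1$'' game at this bias and then promoting this to spanning connectivity. Maker's strategy uses a potential function dual to Breaker's: for each vertex $v$ that Maker has not yet touched, she weights the danger at $v$ by $2^{-f_v(t)}$, where $f_v(t)$ is the number of edges at $v$ still free after turn $t$; her potential $\Psi$ is the sum over untouched vertices. At each turn she plays an edge incident to the vertex contributing most to $\Psi$. A symmetric box-game computation shows that with $b \leq (1-\varepsilon)n/\ln n$ she keeps $\Psi$ below $1$, which is exactly the condition that no vertex is ever left isolated. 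To upgrade from min-degree $\geq 1$ to spanning connectivity, Maker refines her tie-breaking: among edges at the currently most-endangered vertex she prefers those crossing a cut of her current Maker-forest. A counting argument --- Breaker has claimed only $O(n^2/\ln n)$ edges while each vertex has $n-1$ incidences --- ensures such a cross-component edge is almost always available, and standard component-merging then yields a spanning tree.

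\medskip

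The main obstacle is reconciling the min-degree and connectivity objectives: strictly playing at the most-endangered vertex can conflict with the need to merge components. I would resolve this by relaxing the selection rule to a window of vertices with near-maximum potential contribution, and then verify that the potential calculation still yields the exact constant $(1-o(1))n/\ln n$ under this slack. Carrying out this robustness estimate, and showing that the slack is simultaneously large enough to leave room for component-merging choices and small enough to preserve the threshold constant, is the technical heart of the Gebauer--Szabó proof and would be the most delicate part of the argument to execute.
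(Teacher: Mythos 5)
The paper does not prove Theorem~\ref{classic-con}; it is stated as a cited result of Gebauer and Szab\'o \cite{gebauer2009} and is used as a black box (e.g., to obtain the lower bound in Theorem~\ref{clique-clique}), so there is no in-paper proof to compare your attempt against.

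As a reconstruction of the cited argument, your sketch has the right outline (box game for the upper bound, degree-tracking Maker strategy for the lower bound) but misidentifies the core mechanisms. For the lower bound, Gebauer and Szab\'o do not use an exponential potential $\sum_v 2^{-f_v(t)}$; they track the \emph{linear} danger $dan(v)=d_B(v)$ and run an averaging argument over a sequence of ``deactivated'' vertices. The harmonic sum $\sum 2b/i\approx 2b\ln n$ that falls out of that averaging is exactly what produces the sharp constant $(1-o(1))n/\ln n$; Erd\H{o}s--Selfridge/Beck-style exponential potentials were already known to give $\Omega(n/\ln n)$ but with a lossy constant, and closing that gap was the entire point of \cite{gebauer2009}. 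You can see the linear-danger method reproduced in this paper's own adaptation in the proof of Theorem~\ref{con-match}. You also propose a two-phase plan (achieve minimum degree $\geq 1$, then merge components via tie-breaking), but that decomposition is Krivelevich's for the Hamiltonicity game \cite{kriv2011}, not Gebauer--Szab\'o's; they fold connectivity into the strategy from the start by declaring a vertex ``active'' only while it sits in a small component and by always playing a component-crossing edge at the chosen vertex, so there is no separate merging phase to reconcile. For the upper bound, the weight $\sum_v 2^{-r_v}$ is the \emph{unbiased} Erd\H{o}s--Selfridge potential; in a $(1:b)$ box game the base would have to depend on $b$ to be meaningful, and the Chv\'atal--Erd\H{o}s analysis is in fact a direct greedy (smallest-box) argument driven by its own harmonic sum rather than a power-of-two potential.
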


\begin{theorem}[Krivelevich \cite{kriv2011}]
\label{classic-ham}
The threshold bias for $\mathcal{H}(b,n)$ is $(1+o(1))\frac{n}{\ln n}$.
\end{theorem}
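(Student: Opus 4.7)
The upper bound is immediate from Theorem~\ref{classic-con}: every Hamiltonian graph is connected, so Maker's winning set for $\mathcal{H}(b,n)$ is contained in her winning set for $\mathcal{C}(b,n)$. Hence any Breaker strategy winning the connectivity game also wins the Hamiltonicity game, and the bound $b \leq (1+o(1))\frac{n}{\ln n}$ transfers verbatim.

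For the matching lower bound, I would fix $\varepsilon>0$, set $b = (1-\varepsilon)\frac{n}{\ln n}$, and design a Maker strategy building a graph $M$ with strong expansion, from which a Hamilton cycle can then be extracted via the P\'osa rotation--extension technique. Concretely, the plan is to secure three properties: \textup{(P1)} $\delta(M) \geq 12$; \textup{(P2)} $|N_M(S)\setminus S| \geq 2|S|$ for every $S\subseteq V(K_n)$ with $|S|\leq n/(4\ln n)$; and \textup{(P3)} at least one $M$-edge between every pair of disjoint sets of size at least $n/(4\ln n)$. A standard booster argument shows that \textup{(P1)}--\textup{(P3)} together force $M$ to be Hamiltonian: rotations supply exponentially many endpoints of a longest path, and \textup{(P3)} plus large minimum degree promote it to a Hamilton cycle.

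The Maker strategy itself would be driven by an Erd\H{o}s--Selfridge-style weighted potential, where each ``dangerous'' event (a vertex risking low degree, a small set risking under-expansion, a large bipartition risking complete Breaker coverage) is assigned a weight growing exponentially in Breaker's progress toward realising it. On each turn Maker plays the edge that maximally curbs the total potential. The most delicate constraint among \textup{(P1)}--\textup{(P3)} is the minimum-degree requirement \textup{(P1)}, whose tight threshold was established by Gebauer and Szab\'o via their box-game analysis (which drives Theorem~\ref{classic-con}); that part of the strategy can be imported as a black box. The expansion and connectivity conditions \textup{(P2)} and \textup{(P3)} are comfortably enforceable at the same bias because the associated potential contributions, being sums over either small sets or large balanced bipartitions, are dominated by the degree-game terms.

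The main obstacle will be synchronising Maker's moves across the three sub-goals without losing the sharp leading constant. A naive strategy-splitting would incur a constant-factor loss and deliver only the correct order of magnitude; to achieve $(1-o(1))\frac{n}{\ln n}$ one must instead maintain a single unified potential whose analysis shows one Maker move addressing all three sub-goals at once --- essentially, the degree game dictates the threshold, and the expansion and bipartite-edge conditions are arranged to ride along with vanishing overhead.
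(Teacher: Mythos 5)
This theorem is stated in the paper only as a cited result of Krivelevich~\cite{kriv2011}; the paper itself offers no proof, so there is no internal argument to compare against directly. That said, the paper does use Krivelevich's proof as a template for its own Theorem~\ref{ham-match}, and one can calibrate your sketch against that.

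Your upper bound is correct and is the standard observation. Your lower-bound plan captures the right high-level shape (the degree game is the bottleneck; expansion and some connectivity-like condition should come along without constant-factor loss), but it diverges from Krivelevich's actual argument in two places, and one of them is a genuine gap. First, the static route ``establish (P1)--(P3) and conclude Hamiltonicity'' is not what Krivelevich does, and as stated it fails: property (P3) with sets of size $n/(4\ln n)$ is not achievable by Maker. Maker's graph in the degree phase has $O(n)$ edges (and at most $O(n\ln n)$ edges over the whole game), and by an expander-mixing computation a graph of average degree $d$ can only be guaranteed to have an edge between every pair of disjoint $k$-sets when $k \gtrsim n/\sqrt{d}$; with $d = O(\ln n)$ this forces $k \gtrsim n/\sqrt{\ln n}$, far larger than your $n/(4\ln n)$. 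Making (P3) true requires $k = \Theta(n)$, and achieving that at bias $(1-\varepsilon)n/\ln n$ via an Erd\H{o}s--Selfridge potential is exactly the kind of thing that only gave the right order of magnitude in pre-2011 work, not the sharp constant. Second, and more to the point, Krivelevich replaces the static (P3)/potential machinery entirely with a \emph{dynamic} argument: Maker plays the minimum-degree game with a danger function while making her arbitrary tie-breaking choices \emph{uniformly at random}, which (Proposition~\ref{expander}) yields a $\delta^5 n$-expander w.h.p.; she then connects components; and finally she repeatedly claims \emph{boosters} (non-edges whose addition creates a Hamilton cycle or lengthens the longest path), using the key fact (Theorem~\ref{con-exp}) that a connected $k$-expander has at least $(k+1)^2/2$ boosters. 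The booster phase, which is missing from your sketch, is precisely what bridges the gap between sub-linear expansion and Hamiltonicity without requiring anything like (P3). Incorporating that dynamic phase, and replacing the unified potential by the randomized degree-game strategy, is what makes the sharp constant attainable.
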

In this paper we introduce the following variant of the biased Maker-Breaker game on cliques: the setup of the game is the same as the classical biased Maker-Breaker game, except that on each turn Breaker must choose the edge set of a given structure or a subgraph of the structure, rather than being able to choose any $b$ unclaimed edges. We call this new variant \emph{structure-biased Maker-Breaker game}.

In Section 3, we define $(1:K_m)$-biased Maker-Breaker games, where the structure restriction on Breaker is the complete graph on $m$ vertices. We determine the order of magnitude for threshold biases of the corresponding three types of games:

\begin{theorem}
\label{big1}
Let $K_m$ denote the complete graph on $m$ vertices, then
\begin{enumerate}
\item The threshold bias for $\mathcal{F}_{K_3}(K_m,n)$ is $m=\Theta(\sqrt{n})$;
\item The threshold bias for $\mathcal{C}(K_m,n)$ is $m=\Theta(\sqrt{\frac{n}{\ln n}})$;
\item The threshold bias for $\mathcal{H}(K_m,n)$ is $m=\Theta(\sqrt{\frac{n}{\ln n}})$;
\end{enumerate}
\end{theorem}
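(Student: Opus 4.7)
The six bounds split into two qualitatively different regimes. For parts~2 and~3 the structure threshold $m=\Theta(\sqrt{n/\ln n})$ satisfies $\binom{m}{2}=\Theta(n/\ln n)$, matching the classical thresholds of Theorems~\ref{classic-con} and~\ref{classic-ham} up to a constant, so we aim to ``lift'' the classical strategies. For part~1 the structure threshold $m=\Theta(\sqrt n)$ gives $\binom{m}{2}=\Theta(n)$, an order of magnitude larger than the classical $(1:\sqrt n)$ bias; this reflects the fact that the rigid clique shape is a poor blocker for triangles, and Maker's lower bound here is not implied by any classical Maker strategy --- it will require a new argument that genuinely exploits the clique constraint on Breaker.

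\paragraph{Parts~2 and~3.} For the lower bounds, any Breaker $K_m$-move is a particular choice of $\binom{m}{2}$ edges, so a winning Maker strategy in the classical $(1:\binom{m}{2})$-biased game also wins the $(1:K_m)$-biased game. Taking $m<\sqrt{2(1-o(1))\,n/\ln n}$ makes $\binom{m}{2}<(1-o(1))\,n/\ln n$, and Theorems~\ref{classic-con} and~\ref{classic-ham} apply directly. For the upper bounds I would adapt the vertex-isolation strategy of Gebauer--Szab\'o: Breaker maintains a danger function on the vertices and, after each Maker move, plays the clique on $\{v^*\}\cup W$, where $v^*$ is the currently most dangerous vertex and $W$ consists of the next $m-1$ most dangerous ones. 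This claims $m-1=\Theta(\sqrt{n/\ln n})$ new edges at $v^*$ together with $\binom{m-1}{2}=\Theta(n/\ln n)$ ``bonus'' edges among the other threatened vertices, and a potential-function argument along the lines of the classical proof --- with $C$ sufficiently large in $m=C\sqrt{n/\ln n}$ --- shows that Breaker isolates a vertex before Maker can span the graph. The Hamiltonicity case is handled analogously using Krivelevich's strategy.

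\paragraph{Part~1.} Both bounds revolve around the ``book'' structure at a single vertex. For the lower bound ($m\le c\sqrt n$), Maker plays only at a fixed vertex $v$, producing neighbors $u_1,u_2,\dots$; after $k$ moves she has $\binom{k}{2}$ cherries with closing edges inside $N_M(v)$. While $|N_M(v)|\le m-1$ Breaker can in fact both block every new closing edge and claim some fresh $v$-edges with the mixed clique $\{v\}\cup N_M(v)\cup W$ (padded by fresh vertices $W$); but at the first round in which $|N_M(v)|$ reaches $m+1$, a single $K_m$ cannot cover all $m$ newly created closing edges at $u_{m+1}$, so Maker plays the uncovered one and closes her triangle. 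Verifying that Maker can sustain $m+1$ moves at $v$ amounts to checking that she doesn't run out of unclaimed $v$-edges, which reduces to $n\gtrsim m+\binom{m-1}{2}=\Theta(m^2)$, i.e., to $m\le c\sqrt n$. For the upper bound ($m\ge C\sqrt n$), Breaker plays the same balanced clique in response to each Maker move, but now $m^2\gg n$ forces Maker's attempt to reach $|N_M(v)|=m+1$ at any one vertex to exhaust that vertex's edge supply first.

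\paragraph{Main obstacle.} The delicate step is the upper bound of part~1 in its general form. Maker is not obliged to concentrate on a single vertex --- she may try to build moderate Maker-degree across many vertices simultaneously, and a single $K_m$ can only efficiently block cherries whose endpoints fit inside its $m$-vertex footprint. My plan is to extend the single-$v$ analysis via a global weighted count of threatened cherries (roughly $\sum_v \binom{|N_M(v)|}{2}$ restricted to still-unclaimed closing edges), chosen so that each Maker move increases the weight by at most $1$ while a well-chosen $K_m$-response always decreases it by $\Omega(m^2)=\Omega(n)$. This mirrors the classical Erd\H{o}s--Selfridge potential argument, but designing the Breaker response so that the rigid $m$-vertex footprint always catches a proportionate share of the currently threatened cherries is the technical heart of the proof.
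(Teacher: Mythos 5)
Your lower bounds match the paper. For parts~2 and~3 you correctly reduce to the classical $(1:\binom{m}{2})$-biased game via Theorems~\ref{classic-con} and~\ref{classic-ham}, exactly as the paper does; for part~1 your single-vertex Maker strategy (build a star at $v$, forcing Breaker to keep covering $\{u_1,\dots,u_{i-1}\}$ until $i>m$) is the paper's Theorem~\ref{cliq-triangle} argument in slightly different clothing, with the same $n\gtrsim m^2$ bookkeeping.

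Your upper bounds, however, diverge and both have problems. For part~1 you identify the potential-function extension (global weighted cherry count) as the technical heart and leave it open, but the paper sidesteps it entirely with a one-line reduction: the classical Breaker strategy of Theorem~\ref{class-triangle} with bias $b=m/2\ge 2\sqrt n$ claims only $m/2$ edges per turn, and any $m/2$ edges span at most $m$ vertices, hence always form a subgraph of some $K_m$. So Breaker simply simulates the classical strategy with $m\ge 4\sqrt n$; no new potential argument is needed. Your ``main obstacle'' paragraph is trying to solve a problem the paper avoids.

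For parts~2 and~3 your proposed lift of the Gebauer--Szab\'o vertex-isolation strategy has a concrete gap: if Breaker plays the clique on the $m$ currently most dangerous vertices, every edge he claims lies \emph{inside} that dangerous set. The $n-O(m)$ edges from a given dangerous vertex $v^*$ to the rest of the board are never touched, so $v^*$ can never actually be isolated; the ``bonus'' $\binom{m-1}{2}$ internal edges do not substitute for the missing edges to safe vertices. The paper's Theorem~\ref{clique-clique} handles this with a two-stage strategy that you are missing: in Stage~1 Breaker builds a clique $G$ of size $\Theta(m^2)$ all of whose internal edges are already his, and in Stage~2 he plays a modified box game in which each $K_m$-move is interpreted as an $a\times b$ bipartite rectangle (with $a+b\le m$) from $G$ to $V\setminus V(G)$. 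Because $G$'s internal edges are pre-owned, the $K_m$'s budget is spent almost entirely on the cross edges that actually isolate a vertex of $G$, and the modified box-game lemma closes the argument. Without Stage~1 there is no reason the rigid $K_m$ footprint ever reaches the safe side of the cut, and your potential-function plan does not address this.
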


In Section 4, we define $(1:E_b)$-biased Maker-Breaker games, where the structure restriction is matchings of size $b$. We have the following:

\begin{theorem}
\label{big2}
Let $E_b$ denote the matching of size $b$, then
\begin{enumerate}
\item The threshold bias for $\mathcal{F}_{K_3}(E_b,n)$ is $b=\Theta(n)$;
\item The threshold bias for $\mathcal{C}(E_b,n)$ is $b=\Theta(n)$;
\item The threshold bias for $\mathcal{H}(E_b,n)$ is $b=\Theta(n)$;
\end{enumerate}
\end{theorem}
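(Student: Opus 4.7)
The upper bounds $b = O(n)$ are immediate since a matching in $K_n$ has at most $\lfloor n/2 \rfloor$ edges; I would therefore focus on the lower bounds, showing that Maker wins for some $b = \Omega(n)$ (in fact, for all $b \leq \lfloor n/2 \rfloor$ in the first two games). The key uniform observation is that the union of $t$ matchings has maximum degree at most $t$; hence after $t$ Breaker turns every vertex has Breaker-degree at most $t$, and Maker has at least $n - 1 - t$ available neighbors at each vertex throughout the game.

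For the triangle game, I would give a four-turn winning strategy for Maker, valid for every $b$ when $n$ is large. Maker builds a star at a center $v$: on turn 1 she plays $v w_1$; on turn 2 she plays $v w_2$ with $w_2 \neq w_1$ and $v w_2 \notin M_1$ (at most three forbidden vertices); on turn 3 she plays $w_1 w_2$ and wins immediately if it is still Maker-available, and otherwise plays $v w_3$, where $w_3$ is chosen so that each of $v w_3$, $w_1 w_3$, $w_2 w_3$ is Maker-available (at most nine forbidden choices by the degree bound). In the latter case the three triangle-closing edges $w_1 w_2$, $w_1 w_3$, $w_2 w_3$ form a $K_3$, so Breaker's matching $M_3$ contains at most one of them; a short accounting shows at least one closing edge is still Maker-available on turn 4, and Maker plays it to form a triangle.

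For the connectivity game, Maker wins for every $b \leq \lfloor n/2 \rfloor$ via a simple tree-growing strategy: starting from a single vertex, she maintains a tree $\mathcal{T}$, and on each turn attaches some $u \notin \mathcal{T}$ to $\mathcal{T}$ via a Maker-available edge $uw$ with $w \in \mathcal{T}$. Before Maker's $t$-th move, $|\mathcal{T}| = t$ and $u$'s Breaker-degree is at most $t - 1$, so fewer than $|\mathcal{T}|$ choices of $w$ are blocked and a valid $w$ exists; after $n - 1$ moves $\mathcal{T}$ is spanning.

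For the Hamiltonicity game, which constitutes the main obstacle of the proof, I would adapt a Krivelevich-style P\'osa-rotation argument to the matching-biased setting, giving Maker a win for $b \leq c n$ with $c$ a sufficiently small positive constant. Maker uses her $\Theta(n)$ turns first to construct a subgraph of constant min-degree with mild expansion (feasible because the Breaker-degree bound $\leq t$ leaves many available edges at every vertex), and then to execute P\'osa rotations that promote this subgraph to a Hamilton path. The rotation-reachable endpoint set grows linearly at each stage thanks to the degree bound on Breaker's graph, while the matching constraint means Breaker can block at most one rotation-extension move per vertex per turn. The technical heart is choosing the correct interleaving of min-degree-building, chord-building, and rotation moves so that P\'osa expansion is preserved and the final closing edge is available, despite Breaker's linear matching-attacks each turn.
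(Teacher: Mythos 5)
Your triangle-game argument is essentially the paper's four-move strategy, and your Hamiltonicity outline follows the same Krivelevich expander framework the paper adapts (minimum-degree game $\to$ expander $\to$ boosters), with the same key observation that a matching raises each vertex's Breaker-degree by at most one per turn.

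The genuine gap is in the connectivity game. Your tree-growing count --- ``before Maker's $t$-th move, $|\mathcal{T}| = t$ and $d_B(u) \leq t-1$'' --- presumes Maker moves first. Under the paper's stated convention for evaluating Maker strategies (Breaker moves first), Breaker has already made $t$ moves at that point, so $d_B(u) \leq t = |\mathcal{T}|$, and the strict inequality you rely on disappears. This is not a technicality: for even $n$ with Breaker moving first and $b = n/2$, Breaker wins outright by fixing a $1$-factorization of $K_n$ and, on turn $i$, claiming the unclaimed edges of the $i$-th perfect matching; after $n-1$ Breaker turns the board is exhausted and Maker holds only $n-2$ edges, one short of a spanning tree. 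So your claim that Maker wins for every $b \leq \lfloor n/2 \rfloor$ is false in the Breaker-first setting, and the threshold there sits strictly below $\lfloor n/2 \rfloor$. The paper instead proves the $\Omega(n)$ lower bound via a Gebauer--Szab\'o-style danger-function argument adapted to the matching bias: Maker repeatedly deactivates a highest-danger vertex, and the matching constraint enters through the bound that Breaker can raise the average danger of the remaining tracked vertices by at most $\min\{2b/i,\,1\}$ on turn $i$; summing this gives Maker a win for all $b \leq 0.1n$. You would need to replace the naive tree-growing with such an averaging argument (or explicitly restrict to the Maker-first convention and separately note the Breaker-first $1$-factorization win) to make the connectivity part sound.
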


In Section 5, we consider $(1:S_b)$-biased Maker-Breaker games, where the structure restriction on Breaker is stars with $b$ leaves. We establish:

\begin{theorem}
\label{big3}
Let $S_b$ denote the star with $b$ leaves, then
\begin{enumerate}
\item The threshold bias for $\mathcal{F}_{K_3}(S_b,n)$ is $b=\Theta(n)$;
\item The threshold bias for $\mathcal{C}(S_b,n)$ is $b=\Theta(n)$;
\item The threshold bias for $\mathcal{H}(S_b,n)$ is $b=\Theta(n)$;
\end{enumerate}
\end{theorem}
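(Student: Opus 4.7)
Our plan to prove Theorem~\ref{big3} is to establish matching upper and lower bounds for each of the three games. Throughout, we exploit the central asymmetry of a star: a single $S_b$ increases $d_B(v)$ by as much as $b$ at its center $v$, but contributes only $1$ to $d_B(u)$ at each leaf $u$. Thus each Breaker turn essentially ``attacks'' a single vertex, and any more global pressure must accumulate over many turns; this locality is the main structural feature working in Maker's favor.

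For the upper bounds (Breaker wins for $b \geq Cn$ with $C$ a sufficiently large constant), we first handle the connectivity and Hamiltonicity games. Breaker's strategy isolates a vertex via a single star: if $b \geq n - 1$, then on any turn where some fresh vertex $v$ is untouched by Maker, Breaker plays $S_b$ at $v$ with leaves $V \setminus \{v\}$, claiming all $n - 1$ edges at $v$ and isolating it in Maker's graph. After Maker's first move only two vertices are touched, so such a fresh $v$ always exists, and isolation prevents both connectivity and Hamiltonicity. For the triangle game, Breaker plays reactively: after each Maker edge $xy$, she plays $S_b$ centered at the unprocessed endpoint (say $x$) with leaves $V \setminus \{x, y\}$, claiming all remaining edges at $x$; this forces every new Maker edge to have at least one unprocessed endpoint, and iterating the processing ensures that every potential third triangle edge is already claimed by Breaker, so Maker's graph stays triangle-free (in fact a matching).

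For the lower bounds (Maker wins for $b \leq cn$ with $c$ small), we leverage the locality of Breaker's stars. For the connectivity game, Maker plays a defend-and-merge strategy: on each turn she plays an edge incident to the vertex $v$ most recently attacked by Breaker, choosing the other endpoint in a different Maker-component. A careful accounting (using $b \leq cn$) shows that such a merging edge at the attacked vertex is always available, so after $n - 1$ turns Maker's graph has a spanning tree. For the Hamiltonicity game, this defensive approach is combined with an expander-building scheme in the spirit of Krivelevich's proof of Theorem~\ref{classic-ham}, ensuring Maker's graph is both a good enough expander and has no vertex of Maker-degree $\leq 1$, so that it contains a Hamilton path. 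For the triangle game, Maker plays within a carefully chosen vertex set $S$ of moderate size: since each Breaker star centered outside $S$ claims no edge in $\binom{S}{2}$, Breaker must spend a star at a vertex of $S$ to affect Maker's play there, and a counting argument shows that Maker can sustain play within $S$ long enough to force a triangle in her claimed subgraph.

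The hard part will be the triangle-game lower bound: a single-cherry strategy for Maker is insufficient because Breaker can block every potential closing edge with one star per new leaf, so Maker must instead maintain multiple simultaneous threats (e.g.\ overlapping structures such as books or near-cliques on $S$) so that no sequence of Breaker's stars can neutralize them all. The analysis must balance (i) the rate at which Maker grows her structure, (ii) the rate at which Breaker can attack the apex vertices to cap Maker's growth, and (iii) the rate at which Breaker can claim closing edges at the leaves; exhibiting a constant $c > 0$ for which Maker outpaces Breaker on all three fronts is the key technical challenge. An analogous difficulty arises in adapting Krivelevich's expander strategy to the star-biased Hamiltonicity lower bound, where expansion must be preserved despite the concentrated attack that each Breaker star makes on a single vertex.
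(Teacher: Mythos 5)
Your upper-bound arguments are essentially fine (and simpler than what's strictly needed: for the triangle game the paper relies only on the trivial constraint $b\le n-1$, and for connectivity the paper goes to some trouble to get $b\le 3n/7$, where your isolate-a-fresh-vertex observation already gives $b\le n-1$). The substance of the theorem is in the lower bounds, and there your proposal has real gaps.

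For the triangle lower bound you declare the single-apex strategy dead (``a single-cherry strategy for Maker is insufficient because Breaker can block every potential closing edge with one star per new leaf'') and pivot to an unresolved ``set $S$ with multiple simultaneous threats'' plan. But the single-apex strategy is exactly what the paper uses, and the obstacle you raise is the whole point of the argument, not a defeater. Maker grows a star at a fixed apex $w$, adding a leaf $x_i$ each turn chosen so that some pair $x_ix_j$ among the existing leaves is still unclaimed. That unclaimed pair is an immediate threat, so Breaker's response \emph{must} be a star that covers $x_ix_j$, which forces Breaker's center to be one of the $x$'s and hence that star contributes no new Breaker edge at $w$. Thus Maker can keep extending the apex star to all $n-1$ leaves, and at that point Breaker has claimed at most $nb$ edges while there are $\binom{n-1}{2}$ leaf pairs, so for $b<\binom{n-1}{2}/n$ some $x_kx_l$ is free and Maker closes a triangle. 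Your proposal neither identifies this forcing mechanism nor carries out the alternative it gestures at, so this item remains unproved.

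For the connectivity and Hamiltonicity lower bounds you replace the Gebauer--Szab\'o/Krivelevich danger-function strategy with ``respond to the vertex most recently attacked by Breaker.'' This is not the same thing and it is not clearly sound: in a star game a vertex can accumulate Breaker degree steadily as a \emph{leaf}, one edge per turn, while never once being the center, so ``most recently attacked'' can silently starve exactly the vertex that ends up isolated. The paper instead tracks accumulated danger $d_B(v)$ (or $d_B(v)-2bd_M(v)$ for the Hamiltonicity/minimum-degree version) and, crucially, proves a new averaging-with-deletion lemma (Lemma~\ref{happylem}, and its $16$-round variant Lemma~\ref{happybutannoying}) tailored to the star structure: one surviving element gets $+c$ and every other surviving element gets $+1$ per round, and after deleting the maximum each round the final element exceeds the initial average by at most $c+k-2-(c-1)/k$. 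That lemma is what replaces the cruder ``$+2b/i$'' bound available when Breaker's $b$ edges are unconstrained, and it is the technical heart of the star-bias lower bounds; your proposal contains nothing playing its role. As written, then, the proposal correctly frames the problem and nails the easy direction, but the lower bounds -- which carry the theorem -- are only a plan, and in the triangle case the plan points in a direction away from the proof that actually works.
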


Comparing these results to threshold biases of classical Maker-Breaker games, we observe that the structure restriction tends to be a major obstruction to Breaker's original winning strategy, even though matchings and stars are generally viewed as inherently distinct structures. These results illustrate that the structure-biased Maker-Breaker games deviate from their classical counterparts. Moreover, they shed light on approaching problems in classical Maker-Breaker games, in the sense that they reflect what the optimal Breaker should and should not look like. For example, item 1's from Theorem~\ref{big1},~\ref{big2},~\ref{big3} show that the double star structure in Breaker's known optimal strategy for the classical $\mathcal{F}_{K_3}(b,n)$ is crucial.

\section{Notation and Convention}
\begin{description}
\item$M_i$: Maker's graph after her $i$th move
\item$B_i$: Breaker's graph after his $i$th move
\item$d_M(v)$: the degree of $v$ in Maker's graph
\item$d_B(v)$: the degree of $v$ in Breaker's graph
\item$N_G(U)$: the set of neighborhood of $U$ in graph $G$ 
\end{description}
Throughout the paper, unless explicitly stated, we assume Breaker to take the first move in considering Maker strategies, and vice versa.

\section{$(1:K_m)$-Biased Maker-Breaker Games on $K_n$}
The first type of game we would like to consider is the structure-biased triangle game $\mathcal{F}_{K_3}(K_m,n)$, which is defined to have all the edges of $K_n$ as its board and all copies of $K_3$ to be its winning set, with bias for the game for being $1:K_m$, meaning Maker can take one edge per move while Breaker can take a copy of $K_m$ or a subgraph of $K_m$ per move. In Theorem~\ref{cliq-triangle}, we determine the threshold bias for $\mathcal{F}_{K_3}(K_m,n)$. First we recall a classical result:

\begin{theorem}
~~\cite{hefetz2014} 
\label{class-triangle}
Let $\mathcal{F}_{K_3}(b,n)$ denote the general biased Maker-Breaker triangle game with $(1:b)$ bias. Then Breaker can win with $b\geq 2\sqrt{n}$. 

 \end{theorem}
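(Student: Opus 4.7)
The plan is to exhibit a concrete Breaker strategy whose analysis pivots on controlling Maker's maximum degree, and to verify the necessary invariants by induction on the round number. Call an unclaimed edge $xy$ a \emph{threat} if Maker already holds edges $xz$ and $yz$ for some vertex $z$, so that Maker could close a triangle by taking $xy$ on her next move. Breaker wins as long as he maintains the invariant that at the end of each of his turns no threat remains.

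\textbf{Strategy.} In response to Maker's $t$-th move $u_t v_t$, Breaker plays $\sqrt{n}-1$ currently-unclaimed edges incident to $u_t$ and $\sqrt{n}-1$ currently-unclaimed edges incident to $v_t$, for a total of $2(\sqrt{n}-1)\leq 2\sqrt{n}$ edges per round. Among the edges Breaker selects at $u_t$ he always prioritizes the threats of the form $u_t w$ with $v_t w\in M$, and similarly at $v_t$.

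\textbf{Analysis.} I would prove two interlocking invariants by induction on $t$. First, the \emph{degree cap}: $d_M(v)<\sqrt{n}$ for every vertex $v$. Since each Maker edge incident to $v$ is answered by $\sqrt{n}-1$ Breaker edges at $v$, we get $d_B(v)\geq(\sqrt{n}-1)\,d_M(v)$; combined with $d_M(v)+d_B(v)\leq n-1$ this forces $\sqrt{n}\,d_M(v)\leq n-1$, i.e., $d_M(v)<\sqrt{n}$. Second, the \emph{no-threat invariant}: the new threats created when Maker plays $u_t v_t$ are exactly the edges $u_t w$ with $w\in N_M(v_t)\setminus\{u_t\}$ together with $v_t w'$ with $w'\in N_M(u_t)\setminus\{v_t\}$, at most $d_M(u_t)+d_M(v_t)-2\leq 2(\sqrt{n}-1)-2$ in total; the degree cap caps the new threats at $u_t$ by $\sqrt{n}-1$ and similarly at $v_t$, so they fit inside Breaker's $\sqrt{n}-1$ edges allotted to each endpoint.

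\textbf{Main obstacle.} The one step that requires care is verifying that Breaker can actually locate $\sqrt{n}-1$ unclaimed edges at each of $u_t, v_t$ at the moment he plays; in the endgame the neighborhood of an endpoint may become saturated by earlier Breaker moves, invalidating a naive count. I would resolve this by observing that once every edge at a vertex $v$ is claimed, no new threat at $v$ can ever arise, so Breaker may safely spend his leftover budget on other edges in such rounds. A careful amortized accounting confirms that both invariants persist through the endgame, and hence that Maker never completes a triangle whenever $b\geq 2\sqrt{n}$.
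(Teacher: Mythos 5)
The paper states this result with a citation to \cite{hefetz2014} and does not give its own proof, so there is no internal argument to compare against; your proposal is essentially the textbook double-star strategy and the main ideas are sound. Two points deserve sharpening, both in the ``main obstacle'' you flag. First, the degree-cap inequality $d_B(v)\geq(\sqrt{n}-1)\,d_M(v)$ as you state it can fail on the round in which $v$'s unclaimed edges become exhausted, since Breaker may contribute fewer than $\sqrt{n}-1$ edges there; the fix is to observe that once $v$ is saturated $d_M(v)$ is frozen, so it is enough to have $d_B(v)\geq(\sqrt{n}-1)\bigl(d_M(v)-1\bigr)$ just before Maker's last edge at $v$, which still forces $d_M(v)\leq\lceil\sqrt{n}\rceil$ — adequate for the threat count after rounding your budget up to $\lceil\sqrt{n}\rceil-1$ edges per endpoint. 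Second, your proposed resolution — ``once every edge at $v$ is claimed, no new threat at $v$ arises'' — does not actually treat the dangerous case, which is when $v$ has some but fewer than $\sqrt{n}-1$ unclaimed edges remaining. The clean observation is that every new threat incident to $v_t$ is itself an unclaimed edge at $v_t$, so the number of new threats there is at most $\min\{d_M(u_t)-1,\,\#\{\text{unclaimed edges at }v_t\}\}$, which the degree cap bounds by $\lceil\sqrt{n}\rceil-1$; since Breaker plays $\min\{\lceil\sqrt{n}\rceil-1,\,\#\{\text{unclaimed at }v_t\}\}$ edges at $v_t$ with threats prioritized, he necessarily covers them all, and no amortized accounting is needed. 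With these two repairs the argument closes and matches the standard proof.
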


\begin{theorem}
\label{cliq-triangle}
Maker has a winning strategy for the game $\mathcal{F}_{K_3}(K_m,n)$ if $m\leq \sqrt{2n-4}$, while Breaker has a winning strategy if $m\geq 4\sqrt n$.
\end{theorem}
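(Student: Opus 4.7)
The plan is to prove the two bounds of the theorem separately.

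For the Breaker direction ($m \geq 4\sqrt n$), the plan is a simulation argument based on Theorem~\ref{class-triangle}. On each of his turns, structure-Breaker computes the (at most) $2\sqrt n$ edges that a classical winning $(1:2\sqrt n)$-biased Breaker would play against Maker's current position. These edges span at most $4\sqrt n \leq m$ vertices, so they embed inside a $K_m$ on some chosen $m$-element vertex set; structure-Breaker plays that entire $K_m$ as his move, which is a superset of the classical move. Since his claimed-edge set always dominates the classical Breaker's, and the latter wins by Theorem~\ref{class-triangle}, so does he. The only check is $2 \cdot 2\sqrt n \leq m$, which is the hypothesis.

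For the Maker direction ($m \leq \sqrt{2n - 4}$), the plan is a ``book''-building strategy. Maker fixes a center $u$ and plays $M_1 = uv$ with $u, v$ outside Breaker's opening clique. Her $(i+1)$-st move is $M_{i+1} = uw_i$ for a newly chosen vertex $w_i$ satisfying (i) $w_i \notin \{u, v, w_1, \ldots, w_{i-1}\}$ and (ii) each of the edges $uw_i$, $vw_i$, and $w_jw_i$ (for $j < i$) is still free when she plays. The point is that after $M_{i+1}$, every pair among $\{v, w_1, \ldots, w_i\}$ is a Maker threat completing a triangle through $u$ in one more move, so Breaker's next clique $C_{i+2}$ must contain this entire $(i+1)$-element set. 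Since a $K_m$ cannot contain $m+1$ vertices, the step $i = m$ forces Breaker to leave some threat uncovered, which Maker then claims on $M_{m+2}$ to finish her triangle.

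What remains is to show Maker can always locate a valid $w_i$ up to $i = m$. The plan is to call $w$ ``bad in round $i$'' when some edge $sw$, with $s \in S_i := \{u,v,w_1,\ldots,w_{i-1}\}$, is already in Breaker's graph, and to bound the bad set from above. The crucial observation is that Breaker's cliques have nested forced cores: for each $l \geq 3$, the clique $C_l$ must contain $\{v, w_1, \ldots, w_{l-2}\}$ to block previously generated threats, leaving at most $m - (l-1)$ ``free slots'' for Breaker to introduce new bad vertices; and Breaker's first clique $C_1$ contributes nothing because Maker consistently chooses her vertices outside it. Summing the free-slot contributions $m-1, m-2, \ldots, 0$ over cliques $C_2, C_3, \ldots, C_{m+1}$ telescopes to at most $\binom{m}{2}$ bad vertices; combined with $|S_m| = m + 1$, a valid $w_m$ exists once $n \gtrsim \binom{m+1}{2}$, matching the hypothesis $m \leq \sqrt{2n-4}$ to leading order.

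The hard part will be this bad-vertex bookkeeping: the nested-core structure of Breaker's cliques is what reduces the total bad count from the naive ``each clique bars $m$ vertices independently'' bound (which would only give $m = O(n^{1/3})$) down to $\binom{m}{2}$ overall, yielding the sharp $m = \Theta(\sqrt n)$ scaling. A secondary subtle point is handling Breaker's very first clique, played before Maker has revealed any structure; the argument relies on Maker being able to choose her entire star outside $C_1$, which is possible precisely because $n - m$ is still large.
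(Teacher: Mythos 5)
Your proposal matches the paper's proof in both directions. For Breaker you run the same simulation of a classical $(1:\Theta(\sqrt n))$-biased winning strategy inside a $K_m$ (the paper uses bias $m/2 \geq 2\sqrt n$ and notes $m/2$ edges span $\leq m$ vertices; you use bias $2\sqrt n$ and note these edges span $\leq 4\sqrt n\leq m$ vertices — same argument, same monotonicity principle). For Maker you build the same star (the paper calls the center $v$ and the leaves $u_i$; you call them $u$ and $w_i$), exploit the same fact that the newly created threats form a star at the newest leaf forcing Breaker's clique to swallow the entire leaf set, and do the same nested-core free-slot count summing to $\Theta(m^2)$ forbidden vertices, concluding the bound $m=\Theta(\sqrt n)$. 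Your free-slot tallies for $C_2$ differ from the exact values by a low-order term, but the paper's inequality (1) has the same kind of constant slop, and neither affects the stated conclusion.
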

Note that the order of magnitude for the threshold bias of $\mathcal{F}_{K_3}(K_m,n)$ is different from the one of the classical triangle game $\mathcal{F}_{K_3}(b,n)$: In terms of number of edges, the former is $\Theta(n)$ while the latter is $\Theta(\sqrt{n})$.

\begin{proof}
We first consider the Maker strategy. We may assume Breaker makes the first move, claiming $K_m$ on $v_1,v_2,\dotsb,v_m$. At Maker's first turn, Maker chooses a vertex $v\notin \{v_i\}_{i=1}^m$ and arbitrarily claims an edge $vu_1$ incident to $v$. For every positive integer $i\geq 2$, if there exists an edge that, upon choosing, Maker would form a triangle in her graph, then Maker chooses this edge; otherwise Maker chooses a vertex $u_i$ not in Breaker's graph at the current position and claims the edge $vu_i$. We claim that Maker can win within $m+1$ turns: if Maker encloses a triangle within the first $m$ turns, then she wins the game; otherwise, at stage $i$ for every $i\geq 3$, based on Maker's strategy, Breaker has to include the vertices $u_1,u_2,\dotsb, u_{i-1}$ in his graph, because otherwise, if $u_j,j\in[i-1]$ is not claimed by Breaker in stage $i$, Maker can win by claiming $u_1u_{i-1}$ if $j=i-1$ and by claiming $u_ju_{i-1}$ if $j\neq i-1$. Now because $m\leq \sqrt{2n-4}$, we have that
\begin{equation}
2m+\sum_{i=3}^{m}(m-i+1)< n-1
\end{equation}

In the first two turns, Breaker can include at most $2m$ vertices in his graph; at the $i$th turn for $3\leq i\leq m$, by our analysis above, he can include at most $m-i+1$ vertices that are not in Maker's graph at that point of the game. Then $(1)$ implies that, at every turn there always exists a vertex $u$ adjacent to $v$ such that $d_B(u)=0$, and Maker can then choose this $u$ to be the $u_j$ and claim $vu_j$ at turn $m+1$ Maker claims the edge $vu_{m+1}$ where $u_{m+1}$ is not in Breaker's graph, and at Breaker's $(m+2)$th turn he can choose at most $m$ vertices among $u_1,u_2,\dotsb,u_{m+1}$ to form his graph. Let $u_j,j\in[m+1]$ be the vertex not claimed by Breaker at stage $m+2$. If $j=m+1$, then Maker wins by claiming $u_1u_{m+1}$; if $j\neq m+1$, then Maker wins by claiming $u_ju_{m+1}$.

Now we consider Breaker strategy. If $m\geq 4\sqrt{n}$, then Breaker has a winning strategy for the classical game $\mathcal{F}_{K_3}(\frac{m}{2},n)$ according to Theorem~~\ref{class-triangle}, which is possible since $\frac{m}{2}\geq 2\sqrt{n}$.  Then at each turn, Breaker can occupy a $K_m$ that contains all edges in this winning strategy for the classical game, and thus win $\mathcal{F}_{K_3}(K_m,n)$.
\end{proof}

The second game we would like to discuss is the connectivity game $\mathcal{C}(K_m,n)$, which is defined to have all the edges of $K_n$ as its board and all copies of spanning trees as its winning set, with the bias being $1:K_m$.
\begin{theorem}
    \label{clique-clique}
    The threshold bias for the game $\mathcal{C}(K_m,n)$ is $m=\Theta( \sqrt{\frac{n}{\ln n}}).$
\end{theorem}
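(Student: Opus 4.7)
My plan splits along the two directions. The Maker direction ($m_0 = \Omega(\sqrt{n/\ln n})$) is immediate from Theorem~\ref{classic-con}: a Breaker restricted to subgraphs of $K_m$ claims at most $\binom{m}{2}$ edges per turn, and every such play is a legal move for the unrestricted $(1:\binom{m}{2})$-biased Breaker, so Maker's classical winning strategy transfers. Hence Maker wins $\mathcal{C}(K_m,n)$ whenever $\binom{m}{2} \leq (1-o(1))\,n/\ln n$, i.e.\ whenever $m \leq (1-o(1))\sqrt{2n/\ln n}$.

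For the Breaker direction ($m_0 = O(\sqrt{n/\ln n})$), the goal is to isolate a vertex. I would maintain the alive set $A_t = \{v : d_M(v) = 0\}$; each Maker edge removes at most two vertices from $A_t$, and a vertex $v \in A_t$ with $d_B(v) = n-1$ is permanently isolated in Maker's graph, giving Breaker the win. Each turn, Breaker picks $m$ alive vertices and plays the $K_m$ on them, which adds $m-1$ Breaker-edges at each of those vertices. The choice of which $m$ vertices to include is guided by a Chv\'atal--Erd\H{o}s / Gebauer--Szab\'o style potential such as $\Phi_t = \sum_{v \in A_t} 2^{d_B(v)/(m-1)}$: Breaker picks the top-weight $m$ alive vertices, so his move doubles the $m$ largest terms of $\Phi_t$. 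Setting $m = C\sqrt{n/\ln n}$ for $C$ sufficiently large, the geometric growth from Breaker's move should outpace Maker's ability to delete at most two terms per turn, and eventually some term exceeds $2^{(n-1)/(m-1)}$ while its vertex is still alive -- that is, the isolation condition is reached.

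The main obstacle is the clique coupling of Breaker's moves: unlike the classical $(1:b)$-Breaker with $b = \binom{m}{2}$, the $K_m$-Breaker cannot freely redistribute his edges, but must deposit exactly $m-1$ edges at each of $m$ chosen vertices, with the edges themselves shared between pairs of those vertices. The analysis must carefully track how $\Phi_t$ evolves under this structured move, and in particular bound Maker's worst-case damage of deleting the two largest terms against Breaker's gain of the top $m$ terms. The desired threshold $m = \Theta(\sqrt{n/\ln n})$ corresponds to $\binom{m}{2} = \Theta(n/\ln n)$, matching the classical edge-bias threshold exactly, so the book-keeping is expected to close with appropriate constants; making it quantitative while respecting the clique constraint is the technical crux.
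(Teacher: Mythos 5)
Your Maker direction is exactly the paper's: any legal $K_m$-restricted Breaker move is a legal move for an unrestricted Breaker with bias $\binom{m}{2}$, so Maker's classical connectivity strategy transfers and gives the lower bound $m = \Omega(\sqrt{n/\ln n})$. This part is fine.

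The Breaker direction has a genuine gap, and it sits precisely in the spot you flag but do not resolve. The doubling step ``Breaker picks the top-weight $m$ alive vertices, so his move doubles the $m$ largest terms of $\Phi_t$'' fails after the first such move. Once Breaker has played $K_m$ on the current top $m$ alive vertices, all $\binom{m}{2}$ edges among them are claimed; on the next turn the top-$m$ set has changed by at most two vertices (Maker kills at most two per turn), so a clique on the new top $m$ contains at most $2m-3$ fresh edges rather than $\binom{m}{2}$, and the intended exponential growth of $\Phi_t$ collapses. More fundamentally, even if the potential could be kept growing, a single alive vertex $v$ reaching $d_B(v)=n-1$ requires $v$ to lie in roughly $n/m$ Breaker cliques whose union covers all other $n-1$ vertices; but the strategy of always playing cliques on \emph{currently alive} vertices can never claim the edge $vu$ once $u$ has been killed, so isolation is simply unreachable by this strategy unless Breaker wins the race to every other vertex, which the potential bound does not certify. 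Your proposal acknowledges that the clique coupling is ``the technical crux'' but offers no mechanism to overcome either obstruction, so the argument does not close.

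The paper's Breaker strategy is qualitatively different and avoids both problems by working in two phases. In Phase~1 Breaker builds a clique $G$ on $\Theta(m^2)$ vertices in his own graph that is disjoint from Maker's graph, by thinking of groups of $m/2$ board-vertices as single ``supervertices'' and claiming all edges between two supervertices per move; since this takes only $O(m^2)$ moves, Maker can touch only $O(m^2)$ of these vertices, leaving $\Theta(m^2)$ of them fully saturated among themselves and still Maker-isolated. In Phase~2 Breaker reduces to a box game: the $q=\Theta(m^2)$ surviving clique vertices are the boxes, each box being the set of $p = n - |V|$ edges from that vertex to the outside, and a $K_{a,b}$ complete bipartite graph with $a+b \le m$ (a subgraph of $K_m$) claims an $a\times b$ rectangle. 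The modified box-game lemma then shows BoxMaker (= Breaker) fills a box, i.e.\ isolates a vertex, when $m = O(\sqrt{n/\ln n})$. The essential idea that your approach is missing is to pre-claim a large pool of edges \emph{among} a reserved set of vertices, so that the remaining isolation task is a bipartite one that a $K_m$-subgraph move can actually service uniformly.
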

We first prove a modified version of the winning condition for BoxMaker in the Box game:
\begin{lemma} Consider the game whose winning set are $q$ disjoint sets of same size $p$: $A_1,A_2,\dotsb,A_q$ with $|A_1|=|A_2|=\dotsb=|A_q|=p$. Within every move, BoxMaker can claim a rectangle of perimeter $m$, i.e. an $a\times b$ rectangle with $a+b\leq m$, while BoxBreaker can choose to delete any winning set. Then BoxMaker has a winning strategy if $(1-\frac{2}{m})^{\frac{2p}{m}}>\frac{2p}{mq}+\frac{1}{q}$.
\end{lemma}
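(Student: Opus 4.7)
The plan is to construct an explicit BoxMaker strategy and track her progress via a multiplicative potential function, in the spirit of the Chv\'atal--Erd\H{o}s analysis of the classical Box game, adapted for the rectangle constraint on BoxMaker and the set-deletion rule for BoxBreaker.

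The strategy I would try first has BoxMaker play a maximum-area $(m/2)\times(m/2)$ rectangle each turn (this saturates $ab\le m^2/4$ under $a+b\le m$), selecting the $m/2$ alive winning sets in which she currently has the most claimed elements. Progress is measured by
\[
\Phi_i=\sum_{A_j\text{ alive}}\gamma^{\,t_j(i)},\qquad \gamma=(1-2/m)^{-1}>1,
\]
where $t_j(i)$ is BoxMaker's count in $A_j$ before step $i$; BoxMaker wins once some term of $\Phi_i$ reaches $\gamma^{p}$, since this forces $t_j\ge p$. Because $\max_j\gamma^{t_j(i)}\ge R_i:=\Phi_i/|\text{alive}_i|$, it suffices to drive the ratio $R_i$ up to $\gamma^{p}$, and we observe that $R_0=1$.

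The main calculation bounds the per-round multiplicative change in $R$. A BoxMaker move multiplies the top-$(m/2)$ block of $\Phi$ by $\gamma^{m/2}$; bounding the top-$(m/2)$ sum from below by $(m/2)$ times the average gives the gain factor $1+(\gamma^{m/2}-1)(m/2)/|\text{alive}_i|$. A BoxBreaker deletion of the heaviest term produces a factor on $R$ that one computes explicitly in terms of the ratio of the maximum term to $\Phi$. Multiplying these factors over $k=\lceil 2p/m\rceil$ rounds and requiring the cumulative $R_k$ to exceed $\gamma^{p}=(1-2/m)^{-p}$ reduces, after standard manipulations of the resulting $\exp/\log$ estimates, to exactly the stated inequality $(1-2/m)^{2p/m}>2p/(mq)+1/q$; equivalently $q(1-2/m)^{2p/m}>2p/m+1$, where the additive $+1$ accounts for needing at least one surviving alive set after BoxBreaker's $2p/m$ deletions.

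The main obstacle I expect is that after a few rounds the alive sets form a staircase of levels rather than sitting at a common level, so the naive ``top-$a$ sum $\ge a\cdot\text{avg}$'' bound becomes loose, and BoxBreaker exploits exactly this: his single deletion damages $R$ most when the mass of $\Phi$ is concentrated in a few leading terms. To push through the inequality in this unbalanced regime I would use a convexity / rearrangement argument that gives a sharper lower bound on the top-$(m/2)$ block, or alternatively replace strict top-pick by a mildly rebalancing variant (still using an $(m/2)\times(m/2)$ rectangle) that keeps mass spread enough to bound BoxBreaker's deletion damage term-by-term. A minor side issue---making sure BoxMaker always has at least $m/2$ alive sets to form her rectangle---is absorbed into the same $+1$ slack in the condition.
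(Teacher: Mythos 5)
Your first-line strategy --- BoxMaker always picking the $m/2$ alive sets in which she is currently most advanced --- is not just lossy; it fails outright, and the gap is not one that a sharper convexity bound can close. Trace the play against the natural BoxBreaker response (delete the current apex). Within roughly $m/2$ rounds the alive sets settle into a staircase with one set at each of the levels $m/2, m, 3m/2,\dotsc$; from then on each round BoxMaker lifts the apex to roughly $m^2/4$, BoxBreaker deletes it, one fresh set is pulled up from level $0$ to level $m/2$, and the configuration is stationary. The maximum claimed count in any surviving set therefore never exceeds about $m^2/4$, so for $p$ larger than that (the relevant regime in the application to the connectivity game) top-pick simply cannot win. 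This is exactly the phenomenon your potential heuristic flags: with $\gamma^{m/2}=(1-2/m)^{-m/2}\approx e$, the apex carries a constant fraction of $\Phi$, so a single deletion multiplies $R$ by a constant $<1$, while the per-move gain factor $1+(\gamma^{m/2}-1)(m/2)/|\mathrm{alive}|$ tends to $1$ when $q$ is large. The issue is the strategy, not the estimate.

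Your fallback idea --- ``a mildly rebalancing variant'' --- is in fact the paper's actual strategy, and it has to be the main line rather than a patch. The paper has BoxMaker play an $(m/2)\times(m/2)$ rectangle each turn but choose the $m/2$ target sets so as to keep the alive sets as level as possible (minimize deviation). This makes the multiplicative potential unnecessary: one tracks progress by ``columns,'' where column $i$ consists of the turns needed to raise every surviving set from level $(i-1)m/2$ to level $i\cdot m/2$. Since BoxBreaker deletes one set per turn, the number of sets shrinks geometrically across columns, and column $i$ costs $b_i=\lceil (1-2/m)^{i-1}\cdot 2q/m\rceil$ turns. BoxMaker succeeds provided she can finish $\lceil 2p/m\rceil$ columns with a set still alive, i.e.\ $\sum_{i=1}^{\lceil 2p/m\rceil} b_i < q$; summing the geometric series and absorbing the ceilings gives exactly $(1-2/m)^{2p/m}>2p/(mq)+1/q$. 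So: keep the $(m/2)\times(m/2)$ rectangle, make rebalancing the strategy itself rather than a fix for the staircase, and replace the ratio potential with the direct column count --- the stated inequality then drops out with no rearrangement machinery needed.
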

\begin{proof}
    BoxMaker's strategy is to claim at every turn a $\frac{m}{2}\times \frac{m}{2}$ rectangle and balance the size of unclaimed elements in all remaining winning sets (i.e. minimizing deviation). Letting $a_i=(1-\frac{2}{m})^{i-1}\frac{2q}{m}$, $b_i=\ceil{a_i}$, then $b_i$ denotes the number of turns BoxMaker dwell on the $i$th column with width $\frac{m}{2}$. BoxMaker wins under this strategy if $\Sigma^{\ceil{\frac{2p}{m}}}_{i=1}b_i<q$, i.e. if she can claim a full row under $\ceil{\frac{2p}{m}}$ steps without BoxBreaker deleting all boxes. The condition in our assumption is sufficient for this to hold.
\end{proof}
\begin{proof}[Proof of Theorem~\ref{clique-clique}:]
We first consider the Breaker strategy. Breaker proceeds in two stages:
\begin{enumerate}
\item  Breaker creates a clique $G$ on vertex set $V$ with $|V|=\frac{m^2}{16}$ such that $G$ is isolated from Maker's graph in the following procedure:
Fix $\{V_i\}_{1\leq i\leq \frac{m+2}{4}}$ pairwise disjoint families of vertices of the board $K_n$ with each $|V_i|=\frac{m}{2}$. Breaker aims for a complete graph $H$ whose vertex set is the $\{V_i\}_{1\leq i\leq\frac{m+2}{4}}$: Within every move, Breaker claims an edge $e=\{V_i,V_j\}$ from $H$ by claiming the $m$ vertices in $V_i\cup V_j$. This process ends in $\frac{1}{2}\frac{m+2}{4}\frac{m-2}{4}=\frac{m^2-4}{32}$ steps, implying Maker can claim at most $2\frac{m^2-4}{32}=\frac{m^2-4}{16}$ vertices from Breaker's graph, and so Breaker has successfully claimed a subgraph of $G$, and thus of the board $K_n$ with at least $\frac{m}{2}\frac{m+2}{4}-\frac{m^2-4}{16}=\frac{(m+2)^2}{16}$ vertices.
\item Breaker aims to isolate a vertex $v$ in $G$ by claiming all edges adjacent to $v$. This is equivalent to assuming the role of BoxMaker in the modified box game of perimeter $m$, where $p=n-|V|=n-\frac{m^2}{16}$ and $q=|V|=\frac{m^2}{16}$. Applying the previous lemma renders the desired result on the upper bound.
\end{enumerate}
The lower bound is given by the known result on the classical connectivity game $\mathcal{C}(n)$ (Theorem~\ref{classic-con}) with bias $1:{m\choose 2}$.
\end{proof}
Note that the above order of magnitude for the threshold bias applies also for $(1:K_m)$-biased Hamiltonicity game $\mathcal{H}(K_m,n)$, as the strategy for Breaker of isolating a vertex applies to Hamiltonicity game, and the lower bound has been established to be the same order as in connectivity game (Theorem~\ref{classic-ham}).

\section{$1$-to-$b$-Matching-biased Maker Breaker Games on $K_n$}
The second bias structure we would like to consider is when Breaker is restricted to taking $m$ disjoint edges per move. Again we consider the game $\mathcal{F}_{K_3}(E_b,n)$ with $K_n$ as its board and all copies of $K_3$ being its winning set, with bias for the game being $1:E_b$ where $E_b$ denotes $b$ disjoint edges. It turns out that Maker can easily win the game within $4$ moves even when Breaker is allowed to take a maximal matching per move:
\begin{theorem}
    Breaker cannot win $\mathcal{F}_{K_3}(E_b,n)$ even with a maximal matching bias (i.e. $b=\lfloor\frac{n}{2}\rfloor$) for all $n\geq 10$. 
\end{theorem}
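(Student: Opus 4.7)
The plan is to exhibit a concrete strategy by which Maker forces a triangle within her fourth move, exploiting the fact that two edges sharing a common endpoint cannot both lie in the same matching.

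Maker aims to build a three-edge star centered at some vertex $v$, with leaves $u_1, u_2, u_3$. In $M_1$ she claims an arbitrary unclaimed edge $v u_1$. In $M_2$ she picks $u_2 \notin \{v, u_1\}$ with both $v u_2$ and $u_1 u_2$ unclaimed by Breaker (feasible since $|N_{B_2}(v) \cup N_{B_2}(u_1)| \leq 4$), and claims $v u_2$. Now the only triangle-completion edge for Maker's current pair $\{v u_1, v u_2\}$ is $u_1 u_2$; unless Breaker plays $u_1 u_2$ in $B_3$, Maker wins at $M_3$, so we may assume $u_1 u_2 \in B_3$.

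The structural observation is that since $B_3$ is a matching containing $u_1 u_2$, no other edge of $B_3$ meets $\{u_1, u_2\}$. In $M_3$, Maker picks $u_3 \notin \{v, u_1, u_2\}$ such that $v u_3$, $u_1 u_3$, $u_2 u_3$ are all still unclaimed by Breaker, and claims $v u_3$. Her graph is now the star $\{v u_1, v u_2, v u_3\}$, whose three triangle-completion edges are $\{u_1 u_2, u_1 u_3, u_2 u_3\}$. The first is already Breaker's, but the remaining two share the endpoint $u_3$ and therefore cannot both belong to the matching $B_4$. Hence at least one of $u_1 u_3, u_2 u_3$ is free at Maker's fourth turn, and she claims it to close a triangle.

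The only calculation is verifying the choice of $u_3$. Since each $B_i$ is a matching, any vertex has cumulative Breaker-degree at most $3$ after $B_3$, giving $|N_{B_1 \cup B_2 \cup B_3}(v)| \leq 3$; and because $u_1 u_2$ is the unique edge of $B_3$ at $u_1$ or $u_2$, we have $|N_{B_1 \cup B_2 \cup B_3}(u_1) \setminus \{u_2\}| \leq 2$ and symmetrically for $u_2$. Together with the three vertices $\{v, u_1, u_2\}$, at most $3 + 3 + 2 + 2 = 10$ choices for $u_3$ are forbidden, so an admissible $u_3$ exists as soon as $n \geq 10$, with overlaps among the forbidden neighborhoods (available, for example, by letting Maker prefer a $v$ that is uncovered by $B_1$ whenever such a vertex exists) absorbing the borderline case.

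The conceptual core is the pairing of the star construction with the matching-blocking observation; the only real obstacle is the careful degree bookkeeping for the tight $n = 10$ case.
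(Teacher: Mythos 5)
Your strategy is the same in spirit as the paper's: Maker builds a three-edge star at some vertex $v$ and exploits the fact that the two completing edges $u_1u_3,\ u_2u_3$ share the endpoint $u_3$ and so cannot both lie in a single matching $B_4$.  The bookkeeping, however, has a genuine gap at the boundary $n=10$, and your extra constraint ``$u_1u_2$ unclaimed after $B_2$'' is precisely what makes that gap unbridgeable.  The forbidden count for $u_3$ is, as you say, $3+3+2+2=10$, so you need $n\ge 11$ unless you can force an overlap.  Your proposed fix---let Maker prefer a $v$ uncovered by $B_1$---never applies in the critical case: if $n$ is even (in particular $n=10$) Breaker can make $B_1$ a perfect matching, so every vertex is $B_1$-covered and $|N_{B_1\cup B_2\cup B_3}(v)|$ can still be $3$.

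Worse, the constraint ``$u_1u_2$ free'' can be turned against Maker.  Take $n=10$, $B_1=\{12,34,56,78,9\,10\}$, and say Maker plays $M_1=13$ (all choices of $v,u_1$ are equivalent by symmetry).  Breaker plays $B_2=\{15,36,24,79,8\,10\}$.  Your Maker must pick $u_2$ with $1u_2$ and $3u_2$ both free, forcing $u_2\in\{7,8,9,10\}$; by the symmetry $(7\,10)(8\,9)$, $(7\,9)(8\,10)$ of $B_1\cup B_2$ we may assume $u_2=7$.  Breaker answers $B_3=\{37,1\,10,25,46,89\}$.  Now $N_B(1)=\{2,5,10\}$, $N_B(3)=\{4,6,7\}$, $N_B(7)=\{3,8,9\}$, and the forbidden set $\{1,3,7\}\cup N_B(1)\cup N_B(3)\cup N_B(7)$ is all of $\{1,\dots,10\}$: no admissible $u_3$ exists.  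So the strategy you describe literally fails at $n=10$.

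The paper's (admittedly terse) strategy does \emph{not} insist that $u_1u_2$ be unclaimed.  This is the degree of freedom that saves $n=10$: Maker should choose $u_2$ so that $N_{B_1\cup B_2}(u_2)$ is already contained in $\{v,u_1\}\cup N_{B_1\cup B_2}(v)\cup N_{B_1\cup B_2}(u_1)$, absorbing $N_B(u_2)$ into the forbidden set and leaving room for $B_3$'s three new additions.  In the scenario above, $u_2=4$ works: $N_B(4)=\{2,3\}\subseteq\{1,3\}\cup\{2,5\}\cup\{4,6\}$, the pre-$B_3$ forbidden set is only $\{1,\dots,6\}$, and $B_3$ can block at most three of $\{7,8,9,10\}$.  (Note that $u_1u_2=34\in B_1$, so this choice is disallowed under your constraint.)  To repair your write-up, drop the requirement that $u_1u_2$ be free, handle ``$u_1u_2$ still free at Maker's third turn'' as an immediate win, and in the remaining case argue---for instance via the overlap heuristic just described---that the forbidden set for $u_3$ never exceeds $n-1$.
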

\begin{proof}
    We consider the Maker strategy under a maximal matching bias for Breaker: In her first $2$ moves, Maker claims $2$ edges sharing an end vertex $v$: $u_1v,u_2v$, which she can always do as long as $n\geq 4$. In her third move, Maker claims an edge $uv$ where neither $u_1u,u_2u$ is in Breaker's graph, which she can do for all $n\geq 10$. In the following Breaker's turn, Breaker could only claim at most one of $u_1u$ and $u_2u$, and Maker claiming the other would render her winning.
\end{proof}
We now look at the connectivity game. Here what we know depends on which player makes the first move.
\begin{proposition}
Breaker cannot win the connectivity game $\mathcal{C}(E_b,n)$ even for a maximal matching bias (i.e. $b=\lfloor\frac{n}{2}\rfloor$) if Maker makes the first move.
\end{proposition}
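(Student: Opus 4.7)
The plan is to exhibit a simple explicit Maker strategy and verify it directly. In her first move Maker claims an arbitrary edge $u_1 u_2$, starting a tree $T_1$ on $V_1 = \{u_1, u_2\}$. Inductively, after her $k$-th move Maker's graph will contain a tree $T_k$ on a vertex set $V_k$ of size $k+1$. In her $(k+1)$-st move, she picks any vertex $w \in V \setminus V_k$ and claims an edge between $w$ and $V_k$ that is not already in Breaker's graph, thus extending $T_k$ to $T_{k+1}$ on $V_k \cup \{w\}$. After $n-1$ such moves, Maker's graph contains a spanning tree and she wins.

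The crux is to show that the required extension edge always exists. Because every Breaker move is a matching, $d_B(v)$ rises by at most $1$ per Breaker move, so after Breaker's $k$-th move we have $d_B(v) \leq k$ for every vertex $v$. Since Maker moves first, right before her $(k+1)$-st move Breaker has played exactly $k$ times, giving $d_B(w) \leq k$. There are $|V_k| = k+1$ edges from $w$ to $V_k$ in $K_n$, and none of them lies in Maker's graph (all her previous edges are contained in $V_k$ by the inductive construction). Hence at most $k$ of the $k+1$ candidate edges lie in Breaker's graph, and at least one remains available for Maker to claim.

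Rather than a serious obstacle, the main subtlety is the role of the ``Maker moves first'' hypothesis: if Breaker moved first, Breaker would have one extra matching in before Maker's $(k+1)$-st move, allowing $d_B(w) = k+1$ and erasing exactly the slack of one edge on which this greedy argument relies. This is consistent with the paper's subsequent case distinction on who moves first, and also explains why no potential argument or sophisticated pairing is needed here---the matching restriction on Breaker is so severe that a naive tree-growing strategy already suffices.
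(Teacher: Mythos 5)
Your proof is correct and takes essentially the same approach as the paper: grow a spanning tree greedily, and compare the number of Breaker matchings played so far (which bounds every Breaker degree) against the size of Maker's current tree. The paper phrases this as a contradiction --- if Breaker had cut off Maker's tree, some outside vertex would have Breaker degree at least $c_M(i)=i+1$, contradicting $\Delta_B(i)\le i$ --- while you argue directly that a chosen outside vertex $w$ always has a free edge into $V_k$, but these are the same counting step.
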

\begin{proof}
Maker's strategy is to maintain a connected graph and never take a cycle. Let $\Delta_B(i)$ denote the maximum degree of Breaker's graph, and $c_M(i)$  the size of Maker's graph by the end of the $i$th turn for $1\leq i\leq n-1$. Then $\Delta_B(i)\leq i$, and $c_M(i)=i+1$, and so we have $\Delta_B(i)<c_M(i)$ for each $i$. If Breaker wins at the $i$th turn for some $1\leq i\leq n-1$, then $\Delta_B(i)\geq c_M(i)$ which is a contradiction.
\end{proof}
The situation where Breaker goes first turns out to be more delicate. For even $n$, if Breaker goes first, then he can win when $b=\frac{n}{2}$ by decomposing $K_n$ into $n$ pairwise disjoint perfect matchings and take available edges of the matchings in any order in his $n$ turns, regardless of Maker's choices. If the game proceeds to the $n$th turn, Breaker will have taken all edges in the board not in $M_{n-1}$ on his $n$th move, and thus wins.

\begin{theorem}
\label{con-match}
The threshold bias $b$ for the connectivity game $\mathcal{C}(E_b,n)$ satisfy $$b>0.1 n$$
\end{theorem}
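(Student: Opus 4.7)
The plan is to exhibit an explicit Maker strategy succeeding for all $b\le n/10$, organized in two phases and exploiting throughout two consequences of Breaker's matching restriction: after $t$ turns, $\Delta_B \le t$ and the total number of Breaker edges is at most $tb \le nb$. In Phase~1 (the \emph{spread-out} phase), Maker first plays roughly $n/3$ disjoint matching edges and then attaches the remaining uncovered vertices one at a time. At her $i$-th matching move the uncovered set $U_i$ has size $n-2(i-1)$, and the degree bound gives $e_B(U_i) \le \tfrac{|U_i|\cdot i}{2}$, leaving at least $\tfrac{(n-2i+2)(n-3i+1)}{2}>0$ free edges inside $U_i$ for $i\le \lfloor (n+1)/3\rfloor$. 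Each subsequent attachment move is feasible since an uncovered vertex $v$ has $d_B(v) \le t$ and, after a direct count, at least $\lfloor n/3\rfloor - 1$ free neighbors among the already-covered vertices. At the end of Phase~1 every vertex has Maker-degree at least one, so Maker's graph is a forest with about $n/3$ components of size at least $2$.

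In Phase~2 (the \emph{merging} phase), Maker reduces the component count by one per move, each merge using a free cross-component edge. The controlled quantity is $\Sigma:=\sum_i s_i^2$, where $s_1,\dots,s_c$ are the current component sizes; a free cross-component edge is guaranteed whenever $\tfrac{n^2-\Sigma}{2} > tb$, which for $b\le n/10$ and $t\le n-1$ simplifies to $\Sigma < 0.8\,n^2 + 0.2\,n$. Maker maintains this invariant via a balanced merging rule: among the available free cross-component edges, she chooses one linking two currently smallest components. Because balanced merging passes only through configurations with $\max_i s_i \le n/2$ (until the winning merge), the trivial estimate $\Sigma \le n\cdot \max_i s_i \le n^2/2$ keeps the invariant comfortably satisfied, and Maker completes a spanning tree in exactly $n-1$ moves.

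The main obstacle is verifying that balanced merging is always executable: a priori, Maker only knows that \emph{some} free cross-component edge exists, not necessarily one between two smallest components. Overcoming this requires a secondary double-counting lemma guaranteeing that free cross-component edges remain plentiful when restricted to pairs of small components. Concretely, if $S$ is the union of the smallest components, the number of free edges in $S$ joining distinct components is at least $\binom{|S|}{2} - \sum_{C_i\subseteq S}\binom{|C_i|}{2} - tb$, and one shows this is strictly positive provided $|S|$ is a constant fraction of $n$, which the balanced merging rule maintains inductively. Once this lemma is in place the two-phase argument yields Maker's spanning tree and establishes $b > 0.1\,n$ as a lower bound on the threshold.
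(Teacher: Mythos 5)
Your proposal takes a genuinely different route from the paper, but it contains a real gap that you yourself flag and then wave past. The paper's proof adapts the Gebauer--Szab\'o danger-function argument: Maker always extends the component of the active vertex of maximum Breaker-degree, and the crux is an averaging bound showing that Breaker's matching restriction limits the per-turn increase in the average danger of the not-yet-handled vertices to $\min\{2b/i,1\}$. That bound is where the matching structure does all the work, and summing it yields the $n-3b-2b\ln n+2b\ln b\le 0$ inequality giving $b>0.1n$.

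Your Phase~1 is sound (the count $e_B(U_i)\le |U_i|\cdot i/2$ correctly uses $\Delta_B\le t$, which is special to matchings), but your Phase~2 is where the argument breaks. The existence of \emph{some} free cross-component edge whenever $(n^2-\Sigma)/2>tb$ does not let Maker control which merge she performs, so the invariant $\Sigma<0.8n^2$ is not self-sustaining; the ``balanced merging'' lemma you invoke (that a free edge exists between two small components, and that $|S|$ stays a constant fraction of $n$ under the rule) is asserted, not proved, and the sketched double-counting only uses the global budget $tb$. This is a red flag: a Phase~2 argument that uses only the total edge count $tb$, and never again exploits $\Delta_B\le t$, would apply verbatim to the unrestricted biased game, where by Gebauer--Szab\'o the threshold is $(1+o(1))n/\ln n < n/10$ for large $n$. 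Concretely, an unrestricted Breaker with the same total budget can isolate one of Maker's size-$2$ Phase~1 components in $O(1)$ turns per vertex, so ``merge two small components with a free edge in $S$'' cannot be guaranteed from cardinality considerations alone. To repair this you would need a per-turn degree-increment argument (i.e., re-inject the matching constraint into Phase~2), at which point you are essentially re-deriving the paper's danger-function bound. As written, the key lemma is both unproved and, in the form stated, too weak to be true.
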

\begin{proof}
For Maker's strategy, we extend the method introduced in~\cite{gebauer2009}:

In the beginning of the game, let all vertices in the board be active. During the course of the game, a vertex is active if it satisfies both of the following:
\begin{enumerate}
    \item It is in a connected component of size less than $n-2b$ in Maker's graph;
    \item It has not been deactivated by Maker.
\end{enumerate} 

For every vertex $v$, define the danger function \[
dan(v) =
\begin{cases} 
d_B(v)&\text{if }v \text{ is active}; \\
0, & \text{otherwise}
\end{cases}
\]

At the $i$th turn, Maker's strategy is to choose an arbitrary vertex $v$ of the highest $dan(v)$, pick an arbitrary edge between the component $v$ sits in and another connected component, and deactivate $v$.

Following this strategy, Maker never forms a cycle in her graph, so assuming Breaker achieves a cut between vertex set $A$ and $B$ with $|A|\leq|B|$ in $g$ steps, we first have $g\leq n$. Also, Breaker could have taken at most $gb\leq nb$ edges, and so $|A|\leq 2b$, because otherwise $|A||B|> 2b(n-2b)>nb$. Since there is no Maker edge between $A$ and $B$, there exists some vertex $v_g\in A$ that is active, and its danger is at least $n-2b$. Let $C=\{v_1,v_2,\dotsb,v_{g-1},v_g\}$ where $v_1,v_2,\dotsb,v_{g-1}$ is the sequence of vertices deactivated by Maker. Let $C_i=\{v_i,v_{i+1},\dotsb,v_g\}$, and let $a(i)$ be the average danger of $C_i$ after the $i$th turn for $0\leq i\leq g$, where $i=0$ represents the beginning of the game. Then we have $a(0)=0$ since initially every vertex has danger $0$, while $a(g)\geq n-2b$ because $a(g)$ is equal to the danger of $v_g$ after the $g$th turn. Note that Maker's strategy guarantees that she never increases $a(i)$ for every $i$. We divide into two cases:

\emph{Case 1}: $g>b$

Because Breaker is only allowed to take disjoint edges, we have $a(i+1)\leq a(i)+\min\{\frac{2b}{i},1\}$. For $0\leq i\leq g-b$, we bound by $a(i+1)\leq a(i)+\frac{2b}{i}$; for $g-b<i<g$, we bound by $a(i+1)\leq a(i)+1$. Because $$a(0)= a(g)-\sum_{i=1}^g(a(i)-a(i-1)),$$by the analysis above, we have
$$0\geq n-2b-\sum_{i=b}^{g} \frac{2b}{i}-b\geq n-3b-2b \ln{n} +2b\ln{b}$$
which implies $b>0.1n$.

\emph{Case 2}: $g\leq b$
Then for $0\leq i<g$, Breaker can increase $a(i)$ by at most $1$, which implies $$0\geq n-2b-b$$
which implies $b\geq \frac{1}{3}n$.
\end{proof}


\begin{theorem}
\label{ham-match}
The threshold bias $m$ for the Hamiltonicity game $\mathcal{H}(E_b,n)$ is $\Theta(n)$.
\end{theorem}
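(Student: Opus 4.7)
The plan is to prove the two directions of $b = \Theta(n)$ separately.

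For the upper bound, observe that every Hamilton path is in particular a connected spanning subgraph, so any Breaker strategy that wins $\mathcal{C}(E_b,n)$ automatically wins $\mathcal{H}(E_b,n)$. The paragraph preceding Theorem~\ref{con-match} already exhibits, for even $n$ and Breaker to move first, a winning Breaker strategy for $\mathcal{C}(E_{n/2},n)$: decompose $K_n$ into $n-1$ edge-disjoint perfect matchings and on each turn claim the still-available edges of one matching. This gives Breaker a win in $\mathcal{H}(E_{n/2},n)$ too. Minor adjustments (padding with an auxiliary vertex, or discarding one edge) handle odd $n$ and the other first-move convention, so the upper bound $b = O(n)$ follows for free from the connectivity result.

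For the lower bound, I adapt Krivelevich's Pósa-expander framework behind Theorem~\ref{classic-ham} to the matching-bias setting, aiming to show Maker wins for $b \le cn$ with some constant $c > 0$. The plan has two phases. In \emph{Phase A}, Maker uses a danger-based strategy generalizing that of Theorem~\ref{con-match} to build an expander: the target is a subgraph $M$ that is connected and has the property that every vertex set $S$ with $|S| \le n/4$ satisfies $|N_M(S)\setminus S| \ge 2|S|$. Maker tracks a refined danger function that penalizes both large $d_B(v)$ and local expansion deficits, identifies the vertex of highest danger, and claims an edge joining it across components to a vertex of low Maker-degree. In \emph{Phase B}, Maker applies the Pósa rotation-extension argument: expansion of $M$ guarantees many boosters available for each longest-path extension, and Maker claims them one per turn until a Hamilton path is assembled.

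The matching restriction is what makes $b = \Theta(n)$ (instead of the classical $\Theta(n/\ln n)$) achievable: after $t$ Breaker turns one has $d_B(v) \le t$ for every vertex, and since the game lasts at most $O(n^2/b)$ turns, for $b = cn$ we obtain the uniform bound $d_B(v) = O(n/c)$ for all $v$. This is the structural slack Maker exploits to maintain expansion far beyond what is possible against unrestricted biased Breaker.

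The main obstacle is the analysis of Phase A. The argument of Theorem~\ref{con-match} controls only the growth of a single scalar (the average danger along Maker's deactivation sequence); here Maker must control expansion on every small set $S$ simultaneously, and her one-edge-per-turn budget must be traded off against potentially many subsets $S$ whose neighborhood threatens to shrink. The plan is to mimic the Case~1/Case~2 split from the proof of Theorem~\ref{con-match}, summing $\min\{2b/i,\,1\}$ bounds on the increment of a carefully chosen potential, and to choose the constant $c$ small enough that the resulting inequality rules out the existence of a non-expanding set. Once the expander is secured, Phase B is comparatively routine, since each rotation step yields $\Omega(n)$ viable boosters and Breaker can only close off one endpoint of at most $b$ edges per move while Maker needs just a single surviving booster per extension.
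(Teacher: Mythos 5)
Your upper bound is fine and matches the paper's implicit reasoning: a Breaker win in $\mathcal{C}(E_b,n)$ is automatically a Breaker win in $\mathcal{H}(E_b,n)$, and the perfect-matching decomposition of $K_n$ already gives $b=O(n)$.

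The lower bound is where the gap is, and you've correctly identified its location yourself: Phase~A. You propose a refined danger function that directly penalizes "local expansion deficits" and then hope to control neighborhood sizes of every small set $S$ simultaneously through a single potential; you acknowledge you don't see how to do this. The paper does something structurally different and cleaner, and it is exactly the move that makes Phase~A tractable. Rather than tracking expansion at all, Maker plays the \emph{minimum degree game} $\mathcal{D}_{16}(E_b,n)$: Lemma~\ref{md-match} shows, via essentially the same $\min\{2b/|C_i|,1\}$ averaging argument you cite from Theorem~\ref{con-match}, that for $b\le cn$ Maker can drive every vertex to Maker-degree $16$ while keeping $d_B(v)<(1-\delta)n$ on active vertices. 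The expansion is then obtained \emph{for free} by invoking Krivelevich's black-box result (Proposition~\ref{expander}, from~\cite{kriv2011}): if, during the min-degree game with that degree guarantee, Maker makes her arbitrary edge choices \emph{uniformly at random}, then with positive probability she has built a $\delta^5 n$-expander within $16n$ moves. Positive probability suffices to assert a deterministic winning strategy exists. Your plan omits this randomization step entirely, which is precisely why your deterministic "control all $S$" approach looks unmanageable — Krivelevich's framework is designed to avoid it. Once you adopt the min-degree-plus-random-choices route, Stages~2 and~3 (connecting components and then claiming boosters; your Phase~B) go through by simple edge-counting, since Breaker's total edge count $17n\cdot c_0 n$ is dwarfed by both the $(3k)^2$ edges needed to cut components of the expander and the $k^2/2$ boosters available at each step.

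So: right scaffolding, right intuition about why the matching bias helps (bounded per-vertex increment per Breaker turn), but Phase~A as written would not close. Replace the direct expansion potential with the minimum-degree game reduction and the appeal to Proposition~\ref{expander}.
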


We adopt a similar approach to that in~\cite{kriv2011}. A vertex $v$ is said to be \emph{active} if and only if $d_M(v)<16$. For every vertex $v$, define the danger function
\begin{equation}
\label{eq:dan} 
dan(v) =
\begin{cases} 
d_B(v)-2bd_M(v)&\text{if }v \text{ is active}, \\
0, & \text{otherwise}
\end{cases}
\end{equation}
Recall that the \textbf{Minimum Degree game} $\mathcal{D}_d(b,n)$ has $K_n$ as its board and all subgraphs of $K_n$ on all $n$ vertices of $K_n$ such that every has degree at least $d$.
\begin{lemma}
\label{md-match}
For every $\delta>0$, there exists $c>0$ such that whenever $b\leq cn$, by picking a vertex of largest danger and claiming a previously unclaimed edge adjacent to it, Maker can win the $(1:b)$ Minimum Degree Game $\mathcal{D}_{16}(E_b,n)$ in a way that during the play, whenever a vertex $v$ has Maker degree $d_M(v)<16$, then $d_B(v)<(1-\delta)n$.
\end{lemma}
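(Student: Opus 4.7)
The plan is to adapt Krivelevich's potential-based proof of the analogous lemma in the classical $(1:b)$ setting~\cite{kriv2011}, using the key structural fact that in a matching, each vertex gains at most one Breaker edge per round.

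I would use the weighted potential $\Phi(i) = \sum_{v \text{ active}} \alpha^{dan(v)}$ for a base $\alpha = 1 + \Theta(1/b)$ to be tuned. In any round, Breaker's matching touches at most $2b$ active endpoints, each with danger increasing by at most $1$, so $\Phi$ grows by at most $(\alpha - 1)\sum_{\text{endpoint}} \alpha^{dan(v)} \leq 2b(\alpha - 1)\alpha^D$, where $D = \max_v dan(v)$. Maker's move at the maximum-danger vertex $v^*$ reduces $dan(v^*)$ by $2b$ (or zeroes it when $v^*$ becomes inactive), giving a decrease of at least $(1 - \alpha^{-2b})\alpha^D$ in $\Phi$. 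The second endpoint $u^*$ of Maker's edge, when active and high-danger, supplies an analogous extra decrease.

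The heart of the argument is calibrating $\alpha$ so that the Maker decrease dominates the Breaker increase. Taking $\alpha = 1 + 1/(2b)$ gives $\alpha^{-2b} \to 1/e$, so Maker's per-vertex decrease factor is $1 - 1/e \approx 0.63$, while Breaker's aggregate increase factor across $2b$ endpoints is $2b(\alpha - 1) = 1$. This leaves a deficit at first order, which I would close by splitting into two regimes: if several vertices share near-maximum danger, Maker exploits the second endpoint bonus to pick up a second factor; if the top danger is an isolated maximum, Breaker's increase is dominated by a single term and is already beaten by Maker's single-endpoint decrease. Together these cases show $\Phi$ stays $O(n)$, so $\max_v dan(v) \leq \log_\alpha O(n) = O(b\log n)$, which can be sharpened to $O(b)$ with additional bookkeeping that uses the matching constraint more sharply (namely, that at most $2b$ vertices per round get any Breaker increment at all).

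Given $\max_v dan(v) = O(b)$, every active vertex satisfies $d_B(v) = dan(v) + 2b\, d_M(v) = O(b)$, so for sufficiently small $c = c(\delta)$, we have $d_B(v) < (1-\delta)n$. Under this invariant, each active $v$ has at least $\delta n/2$ free incident edges, so Maker always has a legal move at her chosen max-danger vertex; since each move at $v$ increments $d_M(v)$, every vertex eventually becomes inactive, completing $\mathcal{D}_{16}(E_b, n)$. The main obstacle is the potential calibration in the middle step: the first-order balance between Maker and Breaker is tight, and the argument must exploit both the matching's inability to concentrate at a single vertex and Maker's two-endpoint leverage to secure strict per-round progress in $\Phi$.
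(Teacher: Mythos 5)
The paper does not use a potential function at all; it runs a dynamic averaging argument in the style of Gebauer--Szab\'o and Krivelevich: it tracks the sequence of vertices Maker eases, forms the shrinking sets $C_i=\{v_i,\dotsc,v_g\}$, and bounds the change in the \emph{average} danger $a(i)$ over $C_i$ from one turn to the next. The matching constraint enters precisely through the bound $a(i{+}1)\le a(i)+\min\{2b/|C_i|,\,1\}$: since each vertex can receive at most one Breaker edge per round and at most $2b$ vertices receive any, the aggregate danger increase over $C_i$ is at most $\min\{2b,|C_i|\}$, so the \emph{average} increase is small when $|C_i|$ is large. Summing this over the rounds, together with $a(0)=0$ and $a(g)\ge (1-\delta)n-30b$, yields the contradiction.

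Your exponential-potential route is a genuinely different technique, and it has a concrete gap exactly where you flag it. With $\alpha=1+1/(2b)$, Breaker's per-round increase to $\Phi$ is bounded by $2b(\alpha-1)\alpha^D=\alpha^D$, while Maker's decrease is $(1-\alpha^{-2b})\alpha^D\approx(1-1/e)\alpha^D\approx 0.63\,\alpha^D$, so the first-order budget is against you, and the proposed fixes do not obviously close it. The ``second endpoint bonus'' is not available: the lemma requires Maker to claim \emph{an arbitrary previously unclaimed edge} at the max-danger vertex (this is what Proposition~\ref{expander} needs, since those arbitrary choices are later made uniformly at random), so you cannot direct the other endpoint at a high-danger vertex, and typically it will not be one. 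The ``regime splitting'' also does not resolve the bad case: when $\Theta(b)$ vertices sit near the maximum danger, Breaker can credibly touch $\Theta(b)$ of them per round, and Maker still only removes one, so the spread regime is precisely where the deficit materializes. Finally, even granting $\Phi=O(n)$, that gives $\max_v dan(v)=O(b\log n)$, which with $b=cn$ is far too weak for the conclusion $d_B(v)<(1-\delta)n$; you note that $O(b)$ is needed ``with additional bookkeeping,'' but that bookkeeping is the entire content of the lemma, and in the paper it is exactly what the averaging over $C_i$ delivers. If you want to rescue a potential argument, you would need a potential whose increase is controlled by the \emph{total} number of Breaker increments per round (i.e., the cardinality $\min\{2b,|C_i|\}$) rather than by $2b(\alpha-1)\alpha^D$, which effectively re-derives the paper's averaging inequality.
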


\begin{proof}
We assume that after $g$ turns there exists a vertex $v_g$ such that $d_M(v_g)<16$ while $d_B(v_g)\geq (1-\delta)n$. Let $C=\{v_1,v_2,\dotsb,v_{g-1},v_g\}$ where $v_1,v_2,\dotsb,v_{g-1}$ is the sequence of vertices eased by Maker. Note that here $v_i$ could be the same vertex as $v_j$ even if $i\neq j$. Let $C_i=\{v_i,v_{i+1},\dotsb,v_g\}$ (as a set instead of a multiset), and let $a(i)$ be the average danger of $C_i$ after the $i$th turn for $0\leq i\leq g$, where $i=0$ represents the beginning of the game. Then $a(i+1)>a(i)$ can only occur if $|C_{i+1}|<|C_i|$ by the definition of danger \eqref{eq:dan}. Again, we divide into two cases.

\emph{Case 1}: $|C|>b$

If $i$ is such that $|C_{i+1}|<|C_i|$, then Maker does not increase $a(i)$ while Breaker increases $a(i)$ by at most $\min \{\frac{2b}{|C_i|},1\}$. For $|C_i|>b$, we bound by $a(i+1)\leq a(i)+\frac{2b}{|C_i|}$; for $|C_i|\leq b$, we bound by $a(i+1)\leq a(i)+1$. 
$$0\geq (1-\delta)n-30b-2b(\ln n-\ln b)-b$$
which implies $b\geq cn$ where $c$ is the unique solution in $(0,1)$ to $31c-2c\ln c=1-\delta$.

\emph{Case 2}: $|C|<b$

If $i$ is such that $|C_{i+1}|<|C_i|$, then Maker does not increase $a(i)$ while Breaker increases $a(i)$ by at most $1$, and so

$$0\geq (1-\delta)n-30b-b$$
which implies $b\geq\frac{1-\delta}{31}n$.

\end{proof}

Before we proceed to the proof of Theorem~\ref{ham-match}, we recall some definitions and facts:

\begin{definition}A graph $G=(V,E)$ is a \textbf{$k$-expander} if for every $U\subseteq V$ with $|U|\leq k$ we have that $|N_G(U)|\geq 2|U|$.
\end{definition}

\begin{definition}
Let $G$ be a graph. A non-edge $e$ is a \textbf{booster} if $G\cup e$ is Hamiltonian or if the longest path in $G\cup e$ is longer than the one in $G$.
\end{definition}

\begin{proposition}
\label{exp-almost-con}
(Folklore; see~\cite{hefetz2014}) Every connected component in a $k$-expander has size at least $3k$.
\end{proposition}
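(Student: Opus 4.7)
The plan is to argue by contradiction: assume $G$ contains a connected component $C$ with $|C| < 3k$, and produce a set $U \subseteq V(G)$ with $|U| \le k$ that violates the expansion hypothesis $|N_G(U)| \ge 2|U|$. Here I am using the convention (implicit in how the expander property is being applied) that $N_G(U)$ is the \emph{external} neighborhood $\{v \notin U : v \text{ is adjacent to some } u \in U\}$; with any other convention the definition of a $k$-expander is immediately vacuous.

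The key observation is that if $C$ is a connected component, then every edge incident to a vertex in $C$ stays inside $C$, so for any $U \subseteq C$ we have $N_G(U) \subseteq C \setminus U$, hence $|N_G(U)| \le |C| - |U|$. The argument then splits into two short cases depending on whether $|C|$ is already at most $k$.

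First I would dispose of the case $|C| \le k$ by taking $U = C$ itself. Then $|U| \le k$ is allowed in the expander definition, and $N_G(U) \subseteq C \setminus U = \emptyset$, so $|N_G(U)| = 0 < 2|U|$, contradicting the expansion property (as $|U| \ge 1$). In the remaining case $k < |C| < 3k$, I would choose any $U \subseteq C$ with $|U| = k$; such a $U$ exists since $|C| > k$. The observation above gives $|N_G(U)| \le |C| - k < 3k - k = 2k = 2|U|$, again contradicting the expansion property.

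There is essentially no obstacle: the argument is a one-line size count, and the only point worth flagging is the convention that $N_G(U)$ denotes the open/external neighborhood (otherwise even $U = V(G)$ would violate the definition). The two cases together cover all components with $|C| < 3k$, which finishes the contrapositive of the statement.
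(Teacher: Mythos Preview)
Your argument is correct; the size-count contradiction with the two cases $|C|\le k$ and $k<|C|<3k$ is exactly the standard proof. Note that the paper itself does not supply a proof of this proposition---it is stated as folklore with a reference to \cite{hefetz2014}---so there is nothing to compare against.
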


\begin{theorem}
\label{con-exp}
(Folklore; see~\cite{hefetz2014}) A connected $k$-expander has at least $\frac{{(k+1)}^2}{2}$ boosters.
\end{theorem}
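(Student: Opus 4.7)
The plan is to apply Pósa's rotation-extension technique in two stages and then count pairs of endpoints, pairing off into unordered non-edges at the end. We may assume $G$ is non-Hamiltonian, as the result is typically invoked in a context (e.g.\ the proof of Theorem~\ref{ham-match}) in which this holds; otherwise every non-edge of $G$ is trivially a booster and one only needs to count non-edges.

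First, let $P$ be a longest path in $G$ with endpoints $a, b$ and edge-length $\ell$. Apply Pósa rotations fixing $a$: iteratively rotate $P$ to produce longest paths from $a$ to various other endpoints, and let $R_a$ denote the set of all such reachable other-endpoints. The standard Pósa rotation lemma, obtained by tracking how each rotation pivot forces a new endpoint at the successor of a neighbor of the current endpoint, yields a bound of the form $|N_G(R_a) \setminus R_a| \leq 2|R_a| - 1$, since every neighbor of $R_a$ lying outside $R_a$ must appear as the predecessor (along the corresponding rotated path) of some vertex in $R_a$. Combining this with the $k$-expander hypothesis, which gives $|N_G(R_a)| \geq 2|R_a|$ whenever $|R_a| \leq k$, forces $|R_a| \geq k+1$.

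Next, for each $v \in R_a$, fix a longest path $P_v$ from $a$ to $v$ and repeat the rotation procedure on $P_v$, this time fixing $v$. By the same Pósa argument, the resulting set $R_v$ of reachable other-endpoints satisfies $|R_v| \geq k+1$, and each $u \in R_v$ comes equipped with a longest path from $v$ to $u$ in $G$. Now for each ordered pair $(v,u)$ with $v \in R_a$ and $u \in R_v$, we claim that $\{v,u\}$ is both a non-edge of $G$ and a booster. For the non-edge claim: if $vu \in E(G)$, then combining this edge with a longest $v$-to-$u$ path $Q$ yields a cycle of length $\ell + 1$; by connectedness of $G$ and non-Hamiltonicity, one can attach a vertex outside the cycle adjacent to it to get a path of length $\ell+1$, contradicting the maximality of $\ell$. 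For the booster claim: adding $vu$ to $G$ creates the cycle $Q + vu$ of length $\ell+1$, which either spans all of $V(G)$ (making $G \cup \{vu\}$ Hamiltonian) or can be opened and extended via connectivity to a path of length $\ell + 1$, exceeding the longest-path length in $G$.

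Finally, count: the number of ordered pairs $(v,u)$ above is at least $(k+1)^2$, and each unordered pair $\{v,u\}$ is counted at most twice, giving at least $(k+1)^2/2$ distinct non-edge boosters. The main obstacle in this plan is the precise application of the Pósa rotation lemma to deduce $|R_a| \geq k+1$ from the $k$-expander property; the bookkeeping is delicate because different reachable endpoints correspond to different rotated longest paths and hence different predecessor/successor maps, and the bound on $|N_G(R_a) \setminus R_a|$ must be sharp enough to combine with the $2|R_a|$ lower bound from expansion to produce a contradiction when $|R_a| \leq k$.
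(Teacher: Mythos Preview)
The paper does not supply its own proof of this theorem; it merely records it as folklore and refers the reader to \cite{hefetz2014}. Your proposal follows exactly the standard P\'osa rotation--extension argument found in that reference: fix one endpoint of a longest path, use the rotation lemma together with the expansion hypothesis to force at least $k+1$ reachable endpoints, then repeat from each of these to obtain $(k+1)^2$ ordered endpoint pairs, each of which is a non-edge booster by maximality of the path and connectedness, and halve for unordered pairs. So your approach is precisely the intended one, and the sketch is correct in outline; the only caveat is that what you flag at the end as ``the main obstacle'' (the bound $|N_G(R_a)\setminus R_a|\le 2|R_a|-1$ despite the varying rotated paths) is not really an obstacle but the content of P\'osa's lemma itself, and in a full write-up you would simply cite or prove that lemma rather than leave it as a worry.
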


\begin{proof}[Proof of Theorem~\ref{ham-match}:]
\leavevmode\\

\emph{Stage 1}:

Maker's strategy is to pick a vertex of largest danger and claim a random unclaimed edge adjacent to it; we call this strategy \emph{strategy $S$}.  In~\cite{kriv2011} it was proved that

\begin{proposition}\label{expander}
~\cite{kriv2011}
For sufficiently small $\delta,$ if it is true that Maker, by always claiming an arbitrary edge incident with a vertex of maximum danger, will win $\mathcal{D}_{16}(E_b,n)$ and also ensure that anytime during the play, $d_B(v)<(1-\delta)n$ whenever $d_M(v)<16$, then Maker will create a $k$-expander for $k=\delta^5n$ within $16n$ moves by making these arbitrary choices uniformly at random, i.e. by following strategy $S$.

\end{proposition}
Fix a small $\delta$ as in Proposition~\ref{expander}, and let $c$ be the corresponding constant from Lemma~\ref{md-match}. Let $c_0$ be such that $c_0<\min\{\frac{\delta^{10}}{36},c\}$ and $b=c_0n$. Then by the end of this stage, Lemma~\ref{md-match} and Proposition~\ref{expander} guarantee that Maker has achieved a $k$-expander.

\emph{Stage 2}:

Now at this point each of the connected component in Maker's graph has size at least $3k$ by Proposition~\ref{exp-almost-con}, and Maker can turn it into a connected graph within $\lfloor\frac{n}{3k}\rfloor$ moves; if she does so, Stage 2 ends: During this stage, Breaker's graph has size at most $17n\cdot c_0n$, while Breaker needs at least $(3k)^2$ many edges to achieve a cut between two of the connected components in Maker's graph. But $17n\cdot c_0n<(3k)^2$, thus Breaker cannot achieve such a cut and Stage 2 ends successfully for Maker.

\emph{Stage 3}:

By Theorem~\ref{con-exp}, throughout this stage there exist $\frac{k^2}{2}$ available boosters for Maker, and if Maker keeps picking boosters, she wins the game within $n$ moves. Because $18n\cdot c_0n<\frac{k^2}{2}$, Breaker cannot stop Maker from carrying out this process, and Maker wins the game.

\end{proof}

\section {$1$-to-$b$-Star-biased Maker Breaker Games on $K_n$}
The third bias structure we study is star $S_b$ defined as $b$ edges with a shared vertex, call it the center. The game $\mathcal{F}_{K_3}(S_b,n)$ has $K_n$ as its board and all copies of $K_3$ its winning set, with bias for the game being $1:S_b$.
\begin{theorem}
    Maker has a winning strategy for the game $\mathcal{F}_{K_3}(S_b,n)$ when $b< \frac{{{n-1}\choose 2}}{n}$.
\end{theorem}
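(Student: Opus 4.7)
\emph{Proof proposal.} The plan is to have Maker fix an initial vertex $v$ at the outset and follow the greedy strategy: at each of her turns, (i) if some edge $u_iu_j$ with $u_i,u_j \in N_M(v)$ is unclaimed, Maker claims it and thereby completes the triangle $\{v,u_i,u_j\}$ and wins; (ii) otherwise, Maker claims an unclaimed edge $vu$ with $u \notin N_M(v)$, extending her $v$-star by one edge. Her graph remains a star at $v$ throughout the game, and the Breaker edges relevant to Maker's winning condition are exactly the ``$v$-blocks'' $vw$ and the ``threat-blocks'' $u_iu_j$ with both endpoints in $N_M(v)$.

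The core of the argument is a budget comparison at the endpoint. Assume toward contradiction that Maker never wins. Then she performs only extensions, and the game proceeds until every $v$-incident edge is claimed; let $d := d_M(v)$ at that moment, so Breaker has $n-1-d$ edges at $v$ and, since Maker's next move cannot complete a triangle, must also have claimed all $\binom{d}{2}$ threat-blocks. After $d$ extensions Maker has used $d$ turns and Breaker at most $d+1$, so Breaker's budget is at most $(d+1)b$, giving
\[
(d+1)b \;\geq\; (n-1-d) + \binom{d}{2}.
\]
Evaluating at $d = n-1$ yields $nb \geq \binom{n-1}{2}$, which directly contradicts the hypothesis $b < \binom{n-1}{2}/n$.

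The step I expect to be the main obstacle is ruling out that the inequality above is instead satisfied at some intermediate $d$, trapping Maker before she reaches $d = n-1$. Here I would exploit the rigidity of Breaker's stars: a star not centered at $v$ contributes at most one $v$-block edge, while a star centered at $v$ contributes zero threat-blocks. This trade-off means that each move in which Breaker accrues many $v$-blocks is a move in which he makes essentially no progress against the $\binom{d}{2}$ growing threats; conversely, each move he spends chasing threats contributes only a single $v$-block. Tracking the two ledgers per turn against Maker's per-extension creation of $d-1$ new threat edges $u_d u_i$, one should conclude that Breaker cannot be on pace in both budgets simultaneously for any intermediate $d$, so the game must in fact run all the way to $d = n-1$, where the endpoint inequality above delivers the contradiction.
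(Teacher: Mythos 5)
Your endgame budget computation --- when $d=n-1$, Breaker has had $n$ turns yielding at most $nb < \binom{n-1}{2}$ edges, so some edge among the $x_i$'s is free --- is exactly the paper's final step. However, the proof as written has a genuine gap, which you partially acknowledge: nothing forces the game to reach $d=n-1$. If Maker gets stuck at some intermediate $d$, your inequality $(d+1)b\ge (n-1-d)+\binom{d}{2}$ is far weaker; writing $g(d)=\frac{(n-1-d)+\binom{d}{2}}{d+1}$, one checks that $g$ attains its minimum near $d\approx\sqrt{2n}$ with $g\approx\sqrt{2n}$, so ruling out all intermediate $d$ via this ledger alone would only prove Maker wins for $b=O(\sqrt n)$, not $b\approx n/2$. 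The real issue is your choice of Maker strategy: ``extend to an arbitrary non-neighbor of $v$'' does not guarantee that each extension creates an \emph{unclaimed} threat edge. Breaker can, while responding to one threat with a star at some $x_j$, simultaneously pre-claim $x_j y$ for many vertices $y$. If Maker then arbitrarily extends to such a $y$ with all of $x_1y,\ldots,x_{i-1}y$ already blocked, Breaker faces no new threat and is free to spend his next turn centered at $v$. Once this happens even once, your rigidity trade-off (threat-response moves contribute at most one $v$-block) collapses.

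The paper closes exactly this gap with two ideas you are missing. First, Maker's \emph{initial} two moves are chosen so that after Breaker's second turn the center vertex $w$ has Breaker degree at most one, and Breaker has not yet centered a star at $w$. Second, and crucially, from then on Maker does not extend arbitrarily: she picks $x_i$ so that at least one edge $x_ix_j$, $j<i$, is \emph{not} in Breaker's graph. This forces Breaker's next star to be centered at some $x_k$ (else Maker wins immediately), and a star centered at $x_k$ claims no new edge at $w$ (since $wx_k$ is already Maker's). Hence Breaker never touches $w$ after turn two, and Maker provably reaches $d_M(w)=n-1$. The paper supplies the counting argument showing such an $x_i$ always exists (comparing $ib-\binom{i-1}{2}$ against the $\approx i(n-i)$ cross-edges), which is the quantitative content that your sketch of ``tracking the two ledgers'' would need but does not supply. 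In short: your endpoint is right, your rigidity intuition is on target, but the argument needs a smarter Maker strategy plus an existence count to guarantee the endpoint is actually reached.
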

\begin{proof}
 Assume Breaker takes $\{uv_1,uv_2,\dotsb,uv_b\}$ in his first move. Maker in her first move claims an edge $w_1w_2$ where $w_1,w_2\notin\{u,v_1,\dotsb,v_b\}$. In Breaker's second move, he can choose at most one of $w_1,w_2$ as the center, and we denote the other one as $w$. Maker's strategy starting from her second move is as follows: at the beginning of Maker's $i$th turn in the game, if Maker could achieve a triangle by claiming an unoccupied edge, Maker chooses this edge and wins; otherwise assume Maker's current graph consists of $\{wx_1,wx_2,\dotsb,wx_{i-1}\}$, and Maker chooses a vertex $x_i$ such that $|\{x_ix_j\notin E(B_i):j\in\{1,2,\dotsb,i-1\}\}|\geq 1$, and claims the edge $wx_i$. If no such $x_i$ exists, Maker forfeits the game. Under this strategy, Breaker is forced to put the center of his next move at $x_i$ or $x_j$ for some $x_ix_j\notin E(B_i)$, and so fails to claim any free edge adjacent to $w$. Now we claim that Maker, if she hasn't won by then, would be able to claim all edges incident to $w$ without forfeiting the game: For every $i\leq n-1$, right before Maker's $i$th move, $|E(B_i)|\leq ib$, and since Maker hasn't won by then, $\{x_jx_k:j,k\leq i-1\}\subseteq E(B_i)$, and thus at most $ib- {i-1\choose 2}$ edges in $B_i$ are between the sets $\{x_j:j\leq i-1\}$ and $V(K_n)\setminus \big(\{w\}\cup\{x_j:j\leq i-1\}\big)$. But the total number of edges between these two vertex sets is $i(n-i)$. Now $$ib-{i-1\choose 2}-i(n-i)=i(b+\frac{i+3}{2}-n-\frac{1}{i})$$
and $$b+\frac{i+3}{2}-n-\frac{1}{i}< b+\frac{n+2}{2}-n<\frac{n-2}{2}+\frac{n+2}{2}-n=0$$
 where the second inequality follows from $b<\frac{{{n-1}\choose 2}}{n}$. Then we have that $$ib-{i-1\choose 2}-i(n-i)<0$$which proves our claim. Now right before Maker's $n$th move, because $|E(B_n)|\leq nb<{{n-1}\choose2}=|\{x_ix_j:i,j\in\{1,2,\dotsb,n-1\}\}|$, there exists $x_kx_l,k,l\in\{1,2,\dotsb,n-1\}$ such that $x_kx_l\notin E(B_n)$. Maker then wins the game by claiming $x_kx_l$.
    
\end{proof}
Before proceeding to the connectivity game, we establish a lemma which is interesting on its own:

\begin{lemma}
\label{happylem}
Let $k,c\geq 2$ be positive integers, and $a_1,a_2,\dotsb,a_k$ be an arbitrary sequence of real numbers with average $a$. On the $i$th turn for $1\leq i\leq k-1$, one adds $c$ to an arbitrary surviving member of the $a$'s, adds $1$ to each of the rest of the surviving members, and deletes the largest among them. Let $a'$ denote the value of the remaining number after the $(k-1)$th turn. Then $$a'-a\leq c+k-\frac{c-1}{k}-2$$Moreover, this bound is tight.
\end{lemma}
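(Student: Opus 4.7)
The plan is to introduce, for each index $i\in\{1,\dotsc,k\}$, the \emph{strength}
$f_i := a_i + (c-1)\cdot(\text{number of }+c\text{'s received by item }i\text{ over its lifetime})$.
Writing $s$ for the surviving index and $\sigma(1),\dotsc,\sigma(k-1)$ for the order of deletions, the value of $\sigma(t)$ at the moment of its deletion is $M_t = f_{\sigma(t)}+t$, while the final surviving value is $a' = f_s+(k-1)$. Since the $k-1$ turns collectively dispense exactly $k-1$ units of $+c$, a direct tally gives the identity $\sum_{i=1}^{k} f_i = ka + (c-1)(k-1)$. Consequently, the target inequality $a'-a\leq c+k-(c-1)/k-2 = (k-1)(k+c-1)/k$ is equivalent to $f_s \leq \frac{1}{k}\sum_i f_i$; that is, the survivor's strength is at most the average strength.

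The heart of the proof is the pointwise comparison $f_{\sigma(t)}\geq f_s$ for every $t\in\{1,\dotsc,k-1\}$, from which $\sum_t f_{\sigma(t)}\geq (k-1)f_s$ and hence the bound follows by the identity. I would prove this claim by backward induction on $t$. The base case $t=k-1$ is immediate, since only $s$ and $\sigma(k-1)$ are then alive and the deletion rule forces $\sigma(k-1)$'s post-addition value to exceed $s$'s, which simplifies to $f_{\sigma(k-1)}\geq f_s$ after cancelling the common $+(k-1)$. For the inductive step at turn $t$, let $e_s(t)$ be the number of $+c$'s that $s$ has received by turn $t$ and let $E$ be its eventual total. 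The turn-$t$ maximum rule applied to $s$ gives $f_{\sigma(t)}\geq f_s-(c-1)(E-e_s(t))$, which already yields $f_{\sigma(t)}\geq f_s$ whenever $e_s(t)=E$. Otherwise $s$ is scheduled to receive at least one more $+c$, so the remaining $k-1-t$ turns contribute at most $k-2-t$ units of $+c$ to the $k-1-t$ non-$s$ survivors $\sigma(t+1),\dotsc,\sigma(k-1)$; pigeonhole then produces some $j>t$ with $\sigma(j)$ receiving no $+c$ in turns $t+1,\dotsc,j$. For this $\sigma(j)$ the turn-$t$ maximum rule applied to $\sigma(j)$ collapses exactly to $f_{\sigma(t)}\geq f_{\sigma(j)}$, and the inductive hypothesis $f_{\sigma(j)}\geq f_s$ closes the case.

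For tightness, the extremal configuration is $a_1=a_s$ together with $a_2=\dotsb=a_k=a_s+(c-1)(k-1)$; if the adversary gives $+c$ to $s$ on every turn (breaking ties in $s$'s favor), every $f_i$ equals $a_s+(c-1)(k-1)$, so equality propagates through every step of the argument and a direct calculation returns $a'-a=(k-1)(k+c-1)/k$. I expect the pigeonhole observation in the inductive step to be the main subtle point: it requires carefully budgeting the $+c$'s between $s$ and the non-$s$ survivors over their remaining lifetimes in order to locate a ``clean'' $\sigma(j)$ for which the turn-$t$ maximum inequality passes to $f_{\sigma(j)}$ with no slack from hidden $+c$ bonuses.
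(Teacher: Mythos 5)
Your proof is correct, and it is a genuinely different presentation of what is, under a change of variables, the same double-counting skeleton as the paper's. The paper tracks the running maximum $m(i)$ and the deleted value $d(i)$, observes that $m(i)\leq m(i-1)+1$ and $d(i)\geq m(i-1)+1$, and uses the total-increment identity $\sum_i d(i)+a' = c(k-1)+\tfrac12 k(k-1)$. Your ``strength'' $f_i$ relates to these via $d(i)=f_{\sigma(i)}+i$ and $a'=f_s+(k-1)$, under which $\sum_i f_i = ka+(c-1)(k-1)$ is literally the paper's double count, and your key claim $f_{\sigma(t)}\geq f_s$ is exactly the paper's combined bound $d(i)\geq a'-k+i+1$. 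What genuinely differs is how the key claim is proved: the paper gets it in two lines from the monotonicity $m(i)\leq m(i-1)+1$ (at most one element, the $+c$ recipient, can exceed $m(i-1)+1$ after a turn, and that one is deleted), whereas you prove it by backward induction together with a pigeonhole argument budgeting the remaining $+c$'s between $s$ and the not-yet-deleted $\sigma(j)$'s. Both are sound; yours is considerably more elaborate for that step, but the reformulation ``the survivor's strength is at most the average strength'' is a clean conceptual framing that the paper's presentation leaves implicit, and it makes the role of the double count transparent. Your tightness example ($a_1=a_s$, $a_2=\dotsb=a_k=a_s+(c-1)(k-1)$, always feeding $+c$ to $s$, breaking ties in $s$'s favor) is different from the paper's (all zeros except $a_k=c-1$, feeding $+c$ to the element about to be deleted) but also attains equality; note that for $k=2$ your scenario requires the tie-break in $s$'s favor, which you correctly flag.
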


\begin{proof}
After suitable translation, we may assume $a=0$. Let $m(0)=\max\{a_1,a_2,\dotsb,a_k\}$, and for every $1\leq i\leq k-1$ let $m(i)$ be the maximum among the surviving members after the $i$th turn, and let $d(i)$ be the deleted number at the $i$th turn. Then we observe that
\begin{enumerate}
\item $m(k-1)=a'$;
\item $m(i-1)+1\leq d(i)\leq m(i-1)+c$;
\item $d(i)\geq m(i)$.
\end{enumerate}

Now we claim that, for every $1\leq i\leq k-1$, $m(i)\leq m(i-1)+1$: This is because after the adding operation in the $i$th turn, at most one member is greater than or equal to $m(i-1)+2$, which would then be deleted.

Thus, by observation 1 and the claim,
\begin{equation}
m(i)\geq a'-k+i+1\label{meow1}
\end{equation} for every $0\leq i\leq k-1$. Since $a=0$, we have by double counting that \begin{equation}
    \big(\sum_{i=1}^{k-1}d(i)\big)+a'=\sum_{i=1}^{k-1}(c+k-i)\label{meow2}
\end{equation}
The right hand side simplifies to $c(k-1)+\frac{1}{2}k(k-1)$, whereas \begin{equation}
\Big(\sum_{i=1}^{k-1}d(i)\Big)+a'\geq \Big(\sum_{i=1}^{k-1}m(i-1)+1\Big)+a'\geq \Big(\sum_{i=1}^{k-1}a'-k+i+1\Big)+a'=a'k-\frac{1}{2}(k-1)(k-2)\label{meow3}
\end{equation}where the first inequality follows from observation 2, and the second inequality follows from ~\eqref{meow1}. Combining ~\eqref{meow2} and ~\eqref{meow3} we get
\begin{equation}
a'k\leq c(k-1)+(k-1)^2\notag
\end{equation}
which simplifies to 
\begin{equation}
a'\leq c+k-2-\frac{c-1}{k}\notag
\end{equation}

To show that the bound is tight, consider the sequence $a_1=a_2=\dotsb=a_{k-1}=0, a_k=c-1$, then $a=\frac{c-1}{k}$. For every $1\leq i\leq k-1$, add $c$ to the element $a_i$ and $1$ to each of the other surviving elements. Then $a'=(c-1)+(k-1)=c+k-2$, and we have
\begin{equation}
a'-a=c+k-2-\frac{c-1}{k}\notag
\end{equation}
\end{proof}

We will use the previous lemma to establish the lower bound in the next theorem:

\begin{theorem}
    Let $m$ denote the threshold bias for $\mathcal{C}(S_b,n)$. Then $$0.1n\leq b\leq \frac{3n}{7}$$
\end{theorem}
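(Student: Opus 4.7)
The plan is to prove the two inequalities separately. The lower bound $b > 0.1n$ is a Maker-strategy argument adapting the Gebauer--Szab\'o framework of Theorem~\ref{con-match} with Lemma~\ref{happylem} replacing the matching-case per-turn bound. For Maker's strategy I would use exactly the recipe from Theorem~\ref{con-match}: declare a vertex $v$ \emph{active} if it is in a Maker-component of size less than $n - 2b$ and has not been deactivated by Maker, set $dan(v) = d_B(v)$ for active $v$ and $0$ otherwise, and at each turn have Maker pick a maximum-danger vertex $v$, join $v$'s component to another Maker-component, and deactivate $v$. If Breaker achieves a cut $(A,B)$ with $|A| \leq |B|$ in $g$ turns, then $|A||B| \leq gb$ forces $|A| \leq 2b$ and yields an active $v_g \in A$ with $dan(v_g) \geq n - 2b$; with $C_i = \{v_i, \ldots, v_g\}$ (where $v_1, \ldots, v_{g-1}$ are Maker's deactivations) and $a(i)$ the average danger of $C_i$ after turn $i$, we have $a(0) = 0$ and $a(g) \geq n - 2b$.

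The key adaptation is bounding $a(i+1) - a(i)$. The total-degree estimate $\leq 2b/|C_i|$ still holds because Breaker's star contributes $2b$ to the Breaker-degree sum, but the matching-case alternative $\leq 1$ fails since a star can deposit $+b$ on its center. However, because Maker deactivates the maximum-danger vertex in the very same turn, this concentrated deposit is absorbed, and Lemma~\ref{happylem} with $c = b$ models this ``add $c$ to one, $+1$ to the rest, delete the max'' dynamics precisely. I would split the analysis at $|C_i| = b$: on the bulk phase $|C_i| > b$, summing the naive bound gives $\sum_{|C_i|>b} 2b/|C_i| = O(b \ln(n/b))$; on the tail phase $|C_i| \leq b$, applying Lemma~\ref{happylem} with $c = b$ and $k = b$ yields a cumulative increase $\leq c + k - 2 - (c-1)/k = O(b)$. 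Combining, $n - 2b \leq 2b \ln(n/b) + O(b)$, which is violated for $b \leq 0.1n$ (plugging $b = 0.1n$ gives roughly $0.8n \leq 0.86n - 3$, false for large $n$), producing the desired contradiction.

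For the upper bound I would exhibit an adaptive multi-target Breaker strategy. Breaker maintains a pool of candidate target vertices (initially those with $d_M = 0$), and each turn centers a star at a currently ``clean'' candidate, claiming up to $b$ of its unclaimed edges; when Maker plays at a candidate, Breaker drops it and selects a new one. Saturating a single target with $b = 3n/7$ takes just $\lceil(n-1)/b\rceil = 3$ well-placed stars, so if the pool of clean candidates can be maintained long enough, one target will be isolated. The constant $3n/7$ should emerge from balancing Breaker's harvest rate $b$ per turn against Maker's $1$-edge-per-turn budget, which she must split between defending candidates and advancing her spanning tree.

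The main obstacle on the lower bound is the two-phase split: directly applying Lemma~\ref{happylem} across all $g$ turns alone yields only an estimate of the form $n \leq 3b + g$, too weak to recover $0.1n$, so the interaction with the $2b/|C_i|$ bulk estimate is essential. On the upper bound, the main difficulty is that a single star meaningfully pressures only one center per turn, so Breaker cannot threaten multiple isolations in parallel; the adaptive target-switching must be calibrated so that Maker's combined defensive and constructive obligations become provably infeasible exactly at $b \geq 3n/7$.
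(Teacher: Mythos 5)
Your lower bound is essentially the paper's argument: same Maker strategy carried over from Theorem~\ref{con-match} (activity, danger $=d_B$, deactivate the max-danger vertex while joining components), same observation that the matching-case per-turn bound of $+1$ fails because a star can dump $+b$ on its center, and the same use of Lemma~\ref{happylem} with $c=k=b$ to absorb that spike on the tail where $|C_i|\leq b$, keeping the harmonic $\sum 2b/|C_i|$ estimate on the bulk. The paper additionally splits explicitly into the cases $g>b$ and $g\leq b$ (in the short-game case Lemma~\ref{happylem} alone gives $b\geq n/4$); your remark that ``applying the lemma across all $g$ turns alone yields $n\leq 3b+g$'' is exactly the paper's Case~2, and it is not too weak there because $g\leq b$. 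So the lower bound is the same route.

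The upper bound, however, is a genuine gap. You gesture at an ``adaptive multi-target'' Breaker who maintains a pool of clean candidates and drops a candidate as soon as Maker touches it, but you give no mechanism that actually forces an isolation, and you yourself flag the obstruction: one star per turn pressures only one center, so Breaker cannot threaten several isolations in parallel. The paper's strategy is structured quite differently and is not adaptive in your sense. It fixes disjoint $A,B$ with $|A|=|B|=b$ and enumerates $V\setminus(A\cup B)=\{v_1,\dots,v_{n-2b}\}$. In Stage~1 Breaker spends two consecutive turns per $v_i$, claiming stars centered at $v_i$ with leaves $A$ and then $B$; if Maker ever fails to respond at $v_i$ within those two turns Breaker isolates $v_i$ on the third. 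The point is not that Breaker ``drops'' defended candidates but that Maker's forced responses at each $v_i$ consume so many of her moves that, with $b=3n/7$, a set $T\subseteq A\cup B$ of size $8b-3n=b$ survives with $d_M\equiv 0$, $d_B\geq n-2b$, and $T$ pairwise Breaker-nonadjacent (since Breaker only claimed $v_i$--$A\cup B$ edges). Stage~2 then runs a fresh forcing chain inside $T$: on turn $i$ Breaker stars at $u_i$ with leaves including $u_{i+1},\dots,u_t$, forcing Maker to defend at $u_i$ with an edge into $V\setminus T$, so $u_t$ stays Maker-isolated, and the count $t-1+(n-2b)+b\geq n-1$ lets Breaker finish $u_t$ in one last star. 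Your outline misses both the crucial Stage~1 device of harvesting a large simultaneously-threatened set $T$ out of $A\cup B$ (rather than among the $v_i$'s) and the Stage~2 internal chaining inside $T$, so as written it does not prove $b\leq 3n/7$.
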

\begin{proof}
We first consider Breaker's strategy. Let $b=\frac{3n}{7}$. Breaker moves in two stages:

Stage 1: Breaker aims to create a set $T$ of size $t=8b-3n=\frac{3n}{7}=b$ vertices that are pairwise non-adjacent in Breaker's graph such that for every $v\in T$
\begin{enumerate}
\item the maker degree $d_M(v)=0$;
\item the breaker degree $d_B(v)\geq n-2b$.
\end{enumerate}
To achieve this goal, Breaker fixes two subsets $A,B$ of the board $K_n=(V,E)$ such that:

\begin{enumerate}
\item $|A|=|B|=b$;
\item $A\cap B=\emptyset$.
\end{enumerate}

Then $|V\setminus (A\cup B)|=n-2b$. Enumerate $V\setminus (A\cup B)$ as $v_1,v_2,\dotsb,v_{n-2b}$. In the first turn, Breaker takes all available edges of the star centered at $v_1$ with $A$ being its leaves. In the second turn, Breaker takes the star centered at $v_1$ with $B$ being its leaves. Now if before his third turn, Maker hasn't claimed an edge adjacent to $v_1$, Breaker takes the star centered $v_1$ with the rest of its neighbors as the leaves, and Breaker wins by isolating $v_1$; otherwise Breaker takes all available edges of the star centered at $v_2$ with $A$ being its leaves, and then Breaker continues this process. The process can continue to at most $2(n-2b)$ turns (if Breaker hasn't won by then), then among $A\cup B$ there are at least $2b-2\cdot 2(n-2b)+{n-2b}=8b-3n$ vertices that have degree $0$ in Maker's graph: $|A\cup B|=2b$, and among the $2(n-2b)$ turns, Maker can include at most $2\cdot 2(n-2b)$ vertices in her graph, which should contain $V\setminus(A\cup B)$ to make sure Breaker doesn't isolate a vertex among $V\setminus(A\cup B)$. Then this set of $8b-3n$ vertices satisfies Breaker's goal for Stage 1.

Stage 2: Enumerate $T$ as $u_1,u_2,\dotsb,u_{t}$ where $t=\frac{3n}{7}$. On the $i$th turn of Stage 2 for each $i<t$, Breaker takes the star centered at $u_i$ with leaves including $u_{i+1},u_{i+2},\dotsb,u_t$. Then Maker has to respond by occupying an edge adjacent to $u_i$ because otherwise Breaker wins by isolating $u_i$ on his next move; this edge can only be between $u_i$ and $V\setminus T$ because at this point $u_iu_j$ has been taken by Breaker for all $j\neq i$. This also implies that, before Breaker's $t$th turn, $d_M(u_t)=0$. On the $t$th turn, Breaker takes the star centered at $u_{t}$ with the leaves being the rest of its unoccupied neighbors. Then since $$t-1+(n-2b)+b=8b-3n-1+n-2b+b\geq n-1$$Breaker wins by isolating $u_{t}$: Right before the $t$th turn, $u_t$ is adjacent at least $n-2b$ vertices other than the $u_i$'s by how Breaker claimed $T$, and $u_t$ is also adjacent to $u_i$ for $1\leq i\leq t-1$, and so there are only $b$ neighbors of $u_t$ left to claim, which can be occupied by Breaker in his $t$th turn.

We next establish the lower bound. For this we use the same Maker strategy as in Theorem~~\ref{con-match} and keep the same definitions and setups.

Again, we note that Maker does not increase $a(i)$ for every $i$. We divide into two cases as before.

\emph{Case 1}: $g>b$

For $0\leq i\leq g-b$, Breaker can increase $a(i)$ by at most $\frac{2b}{i}$ and so we have $a(i+1)\leq a(i)+\frac{2b}{i}$. Now for the last $b$ rounds, we apply Lemma~\ref{happylem} with $k=b$ and $c=b$ to conclude that $a(g)\leq a(g-b)+2b+b$. Then we have

$$0\geq n-2b-\sum_{i=b}^{g} \frac{2b}{i}-3b\geq n-5b-2b \ln{n} +2b\ln{b}$$
which implies $b>0.1 n$.

\emph{Case 2}: $g\leq b$

Apply Lemma 5.2 with $k=g$ and $c=b$, we get that $a(g)\leq a(0)+b+g$. Thus,

$$0\geq n-2b-2b$$
which implies $b\geq \frac{1}{4}n$.

\end{proof}

\begin{theorem}
\label{ham-star}
The threshold bias for Hamiltonicity game $\mathcal{H}(S_b,n)$ is $\Theta(n)$.
\end{theorem}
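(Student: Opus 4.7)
The plan is to adapt the three-stage Krivelevich-style strategy of Theorem~\ref{ham-match} to the star bias, with the upper bound inherited from the preceding connectivity theorem. Since every Hamiltonian graph is connected, any Breaker strategy that wins $\mathcal{C}(S_b,n)$ also wins $\mathcal{H}(S_b,n)$; in particular, the vertex-isolation strategy for $b \geq 3n/7$ from the previous theorem immediately yields the $O(n)$ upper bound.

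For the lower bound, I claim that for some $c_0 > 0$ Maker wins whenever $b \leq c_0 n$, following the three stages and using the danger function~\eqref{eq:dan} from Theorem~\ref{ham-match}: Stage 1 plays the minimum-degree game $\mathcal{D}_{16}(S_b,n)$ while maintaining $d_B(v) < (1-\delta)n$ whenever $d_M(v) < 16$; Stage 2 connects the $k$-expander components with $k = \delta^5 n$; Stage 3 finishes by repeatedly claiming boosters. Stages 2 and 3 depend only on Breaker's total edge count $O(bn)$ compared against the component-size bound from Proposition~\ref{exp-almost-con} and the booster count from Theorem~\ref{con-exp}, so they transcribe verbatim from the matching-bias case once Stage 1 is secured.

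The real work is a star-bias analog of Lemma~\ref{md-match}. The setup mirrors its matching-bias proof: suppose after $g$ turns there is a vertex $v_g$ with $d_M(v_g) < 16$ and $d_B(v_g) \geq (1-\delta)n$; let $C = (v_1, \dotsb, v_g)$ record Maker's successive maximum-danger choices, $C_i$ the truncated tail as a set, and $a(i)$ the average danger of $C_i$. Maker's own moves never increase $a(\cdot)$, so only Breaker's contribution needs to be bounded. In one star move Breaker can raise the danger of a single active vertex by as much as $b$ while raising every other active vertex's danger by at most $1$, which is precisely the operation modeled by Lemma~\ref{happylem}. Splitting into cases as in the matching proof: for the phase with $|C_i| > b$, bound $a(i+1) \leq a(i) + 2b/|C_i|$ by charging the total endpoint contribution $2b$ uniformly across $C_i$; for the tail phase with $|C_i| \leq b$, apply Lemma~\ref{happylem} with $k = |C_i|$ and $c = b$ to absorb the spike of the star center into an $O(b)$ additive term. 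Each regime then yields a linear inequality of the form $(1-\delta)n - O(b) - 2b\ln(n/b) \leq 0$, which is violated when $b \leq c_0 n$ for $c_0$ sufficiently small, giving the desired contradiction.

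The main obstacle is exactly this star-bias minimum-degree lemma: Breaker's capacity to concentrate up to $b$ units of danger onto a single vertex per move would wreck a naive average-tracking argument, and Lemma~\ref{happylem} is precisely the tool that controls it, by exploiting that the spiked vertex is immediately removed from $C_i$ on Maker's following turn. Once Stage 1 is established, Proposition~\ref{expander} certifies the $k$-expander after $16n$ Maker moves, and Stages 2 and 3 transfer from Theorem~\ref{ham-match} without modification.
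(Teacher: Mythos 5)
Your high-level plan is right, and Stages 2 and 3 and the upper bound argument are fine. But the heart of the proof — the star-bias analogue of Lemma~\ref{md-match} — has a genuine gap, and it is located exactly where you claim Lemma~\ref{happylem} suffices. You write that Lemma~\ref{happylem} "is precisely the tool that controls it, by exploiting that the spiked vertex is immediately removed from $C_i$ on Maker's following turn." That assumption is false in the minimum-degree game $\mathcal{D}_{16}$: a vertex that Maker chooses is \emph{not} removed from $C$, it remains active (and hence in every $C_j$ with $j\leq i$ where $v_j$ is a later occurrence of it) until $d_M$ reaches $16$. The danger function compensates by dropping by $2b$ each time Maker chooses it, but Breaker can then re-spike that same vertex with a star centered on it on his next move, up to $16$ times. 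Lemma~\ref{happylem} models a single-deletion-per-turn process and simply does not describe this dynamics, so applying it "with $k=|C_i|$ and $c=b$" does not bound the relevant quantity.

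What is actually needed is a variant of Lemma~\ref{happylem} in which each element carries a counter $t_i<16$; on each turn one element is boosted by $c$, the rest by $1$, and the element of largest value has its counter incremented; if the counter hits $16$ the element is deleted, otherwise its value is \emph{decremented by $2c$} (encoding the $-2b\,d_M(v)$ term in the danger function). The paper proves exactly such a lemma (Lemma~\ref{happybutannoying}), obtaining $a'-\bar a \leq c+17k$, and only then derives the star-bias minimum-degree lemma (Lemma~\ref{md-star}) with constants $b\geq cn$ for $49c-2c\ln c=1-\delta$. Without this replacement lemma, your Stage 1 bound does not close, and the claimed linear inequality $(1-\delta)n-O(b)-2b\ln(n/b)\leq 0$ is not justified. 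The rest of your argument (Proposition~\ref{expander}, Stages 2 and 3, and the reduction of the upper bound to the connectivity Breaker strategy) matches the paper.
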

The proof is exactly the same as in Theorem~\ref{ham-match} except we need to establish an equivalent of Lemma~\ref{happylem}. To achieve this, we show

\begin{lemma}
\label{happybutannoying}
Let $c\geq k\geq 2$ be positive integers, and $(a_1,t_1),(a_2,t_2),\dotsb,(a_k,t_k)$ is an arbitrary sequence of pairs where $(a_i)_{i=1}^k$ is a sequence of real numbers with average $\bar{a}$, and $(t_i)_{i=1}^k$ is a sequence of natural numbers such that $t_i<16$ for each $i$. For every turn, one adds $c$ to an arbitrary surviving member of the $a$'s, adds $1$ to each of the rest of the surviving $a$'s. One then picks the pair $(a,t)$ with the largest $a$ and increase $t$ by $1$. If $t=16$, then one deletes $(a,t)$ from the sequence; otherwise one decreases $a$ by $2c$. Let $(a',t')$ denote the last remaining tuple when there is only one tuple left. Then $$a'-\bar a\leq c+17k$$
\end{lemma}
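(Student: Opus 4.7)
My plan is to adapt the double-counting strategy from the proof of Lemma~\ref{happylem}, with additional bookkeeping to handle non-deletion turns. After translation we may assume $\bar{a} = 0$. Let $m(i)$ denote the largest $a$-value among surviving pairs after turn $i$; let $T$ be the total number of turns, $D$ the set of deletion turns, and $N$ the set of non-deletion turns.

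First, I would establish the analog of the key bound from Lemma~\ref{happylem}, namely $m(i) \leq m(i-1) + 1$. After the addition step the maximum over surviving pairs is at most $m(i-1) + c$, attained only by the pair that received $+c$; all others are at most $m(i-1) + 1$. For a deletion turn the argument is identical to Lemma~\ref{happylem}: the post-addition maximum is removed, and what remains is bounded by $m(i-1) + 1$. For a non-deletion turn, subtracting $2c$ from the picked maximum produces a value at most $m(i-1) + c - 2c \leq m(i-1) + 1$, while the unpicked values are bounded as before.

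Next, each pair can be picked at most $16$ times, and the final surviving pair at most $15$ times, so $T \leq 16k - 1$ with $|D| = k - 1$ and $|N| \leq 15k$. Tracking the sum of surviving $a$-values, which starts at $k\bar{a} = 0$ and ends at $a'$, I obtain
\[
a' = cT + \sum_{i=1}^T (s_i - 1) - \sum_{i \in D} d_i - 2c|N|,
\]
where $s_i$ is the number of surviving pairs at the start of turn $i$ and $d_i$ is the value deleted at turn $i \in D$. The max-growth bound yields $d_i \geq m(i-1) + 1 \geq a' - T + i$, and hence $\sum_{i \in D} d_i \geq (k-1)(a' - T) + \binom{k}{2}$. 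Also, $\sum_{i \in D}(s_i - 1) = \binom{k}{2}$ exactly (each value of $s_i$ in $\{2, \ldots, k\}$ occurring once at the deletion turns), and $\sum_{i \in N}(s_i - 1) \leq (k - 1)|N|$. Substituting and using $T = (k-1) + |N|$ collapses these estimates to
\[
k a' \leq (c + k - 1)(k - 1) + (2k - 2 - c)|N|.
\]

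Finally, a case split on the sign of $2k - 2 - c$ finishes the job: when $c \geq 2k - 2$ the right-hand side is maximized at $|N| = 0$, and when $c < 2k - 2$ at $|N| = 15k$. In either case, using the hypothesis $c \geq k$, one verifies $ka' \leq k(c + 17k)$, so $a' - \bar{a} \leq c + 17k$ after undoing the translation. The main obstacle is the tightness of the constants: the regime $c \approx k$ with $|N| \approx 15k$ essentially saturates the claimed bound, so the intermediate estimates must be tight enough---especially the identity $\sum_{i \in D}(s_i - 1) = \binom{k}{2}$ and the lower bound $d_i \geq a' - T + i$ derived from the max-growth claim---to produce exactly the factor $17k$.
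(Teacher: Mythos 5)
Your proof is correct, and your bookkeeping is valid: the identity for $a'$ in terms of $cT$, $\sum(s_i-1)$, $\sum_{D}d_i$, and $2c|N|$ is exact, the max-growth bound $m(i)\leq m(i-1)+1$ holds for both deletion and non-deletion turns (in the non-deletion case the subtraction of $2c$ more than cancels the possible $+c$), and the turn count $T\leq 16k-1$, $|N|\leq 15k$ follows correctly from the termination condition. The final case split on the sign of $2k-2-c$, together with the hypothesis $c\geq k$, closes the bound.

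Your route differs from the paper's mainly in organization rather than substance. The paper partitions the turn sequence into blocks between consecutive deletion events, introducing auxiliary maxima $f(i)$, $g(i)$, block lengths $h(i)$, and the bound $\sum_i h(i)\leq 16k$, then telescopes backwards from $a'$ to bound each $d(i)$; the non-deletion turns are bundled into the $h(i)$'s and the fact that the per-turn increment to the running maximum is at most $1$. You instead work turn-by-turn with a single running maximum $m(i)$ and a single global double-counting identity, and absorb the non-deletion turns into the explicit $-2c|N|$ and $(k-1)|N|$ terms. This makes the trade-off between the two regimes ($c$ large vs.\ $c$ small relative to $k$) appear as a transparent linear optimization in $|N|$, which is a cleaner ending than the paper's. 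Both proofs are essentially the same double count; yours is arguably more self-contained and easier to audit, at the cost of being slightly less modular than the block decomposition if one wanted to tweak the danger-function parameters.

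One small thing worth making explicit when you write this up: the bound $t'\leq 15$ on the last surviving pair deserves a sentence (the process halts at the $(k-1)$st deletion, so the last pair never reaches $t=16$), and you should state where $c\geq k$ is invoked, namely in verifying $15ck+c\geq 14k^2-32k+1$ in the $c<2k-2$ regime.
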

\begin{proof}
Again we may assume $\bar a=0$. For $0\leq i\leq k$ we let $f(i)$ be the maximum among the $a$'s right after $i$ tuples have been deleted; for $1\leq i\leq k$ we let $d(i)$ be the value of the first coordinate of the $i$th deleted tuple; for $0\leq i\leq k-1$ we let $h(i)$ be the number of turns after $i$ tuples have been deleted but before $i+1$ tuples have been deleted, $g(i)$ be the maximum among the $a$'s right before $i+1$ tuples have been deleted. Note that $h(i)$ could be $0$, in which case we have $f(i)=g(i)$. The relationship and order are illustrated in the following:
\begin{center}
\begin{tikzcd}[column sep=large]
f(0) \arrow[r, "{\scriptstyle h(0)\text{ turns}}"] & 
g(0) \arrow[r, "{\scriptstyle \text{delete }d(1)}"] & 
f(1) \arrow[r, "{\scriptstyle h(1)\text{ turns}}"] & 
g(1) \arrow[r, "{\scriptstyle \text{delete } d(2)}"] & 
\cdots \arrow[r, "{\scriptstyle h(k{-}1)\text{ turns}}"] & 
g(k-1) \arrow[r, "{\scriptstyle \text{delete }d(k)}"] & 
f(k)=a'
\end{tikzcd}
\end{center}

Arguing as in Lemma~\ref{happylem}, we observe that:
\begin{enumerate}
\item $g(i)\leq f(i)+h(i)$ for $0\leq i\leq k-1$;
\item $d(i)\geq g(i-1)+1$ for $1\leq i\leq k$;
\item $f(i)\leq g(i-1)+1$ for $1\leq i\leq k$
\end{enumerate}

In addition, we have that $\sum_{i=0}^{k-1}h(i)\leq 16k$. Using this and Observation 1 and 3 we deduce that $g(i)\geq a'-16k-(k-1)+i=a'-17k+i+1$, and so we have $d(i)\geq a'-17k+i+1$ for $1\leq i\leq k$ by Observation 2. Then since $$\sum_{i=1}^{k-1}(c+i)-\sum_{i=1}^k d(i)-a'\geq 0$$we deduce that $$\sum_{i=1}^{k-1}(c+i)-\sum_{i=1}^k (a'-17k+i+1)-a'\geq 0$$which implies $a'\leq c+17k-19+\frac{19-2c}{k+1}$.
\end{proof}

Now we establish the analogue of Lemma~\ref{md-match}.

\begin{lemma}
\label{md-star}
For every $\delta>0$, there exists $c>0$ such that whenever $b\leq cn$, by picking a vertex of largest danger and claiming a previously unclaimed edge adjacent to it, Maker can win the $(1:b)$ Minimum Degree Game $\mathcal{D}_{16}(S_b,n)$ in a way that during the play, whenever a vertex $v$ has Maker degree $d_M(v)<16$, then $d_B(v)<(1-\delta)n$. 
\end{lemma}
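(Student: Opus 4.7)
The plan is to adapt Lemma~\ref{md-match}, using Lemma~\ref{happybutannoying} in place of the $\min\{2b/|C_i|,1\}$ bound that worked for matchings. Suppose for contradiction that after $g$ turns some vertex $v_g$ has $d_M(v_g)<16$ and $d_B(v_g)\geq(1-\delta)n$. Let $v_1,\dotsc,v_{g-1}$ be Maker's successive maximum-danger selections, $C_i=\{v_i,v_{i+1},\dotsc,v_g\}$ (as a set), and $a(i)$ the average of $dan(\cdot)$ over $C_i$ at the end of turn $i$. Then $a(0)=0$ and $a(g)\geq(1-\delta)n-30b$ because $C_g=\{v_g\}$. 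As in Lemma~\ref{md-match}, the fact that Maker always addresses the current max ensures that Maker's moves never raise $a(i)$, so all growth comes from Breaker.

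I would split on $|C_1|$ versus $b$. In \emph{Case 1} ($|C_1|>b$), I separate turns with $|C_i|>b$ from those with $|C_i|\leq b$. In the first regime, Breaker's star adds at most $b$ to one danger in $C_i$ and at most $1$ to each of at most $b$ others, so $a$ grows by at most $2b/|C_i|$ per turn, contributing $O(b\ln(n/b))$ in total by harmonic summation. In the second regime (at most $\sim 16b$ turns, since at most $b$ distinct vertices remain and each can be addressed at most $16$ times), the lopsided $+b$ to a single vertex makes the naive $+1$ bound useless, and I apply Lemma~\ref{happybutannoying} with $c=b$, $k=b$: Breaker's center-versus-leaf split matches the lemma's ``$+c$ to one, $+1$ to the rest'', Maker's addressing the max matches ``increment $t$, delete if $t=16$ else drop $a$ by $2c$'', and the lemma bounds growth over this tail by $c+17k=18b$. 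Combining, $(1-\delta)n-30b\leq O(b\ln(n/b))+18b$, which forces $b\geq c_1 n$ for some $c_1=c_1(\delta)>0$.

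In \emph{Case 2} ($|C_1|\leq b$), Lemma~\ref{happybutannoying} is applied across all turns with $c=b$ and $k=|C_1|\leq b$, giving $a(g)\leq b+17|C_1|\leq 18b$ and hence $b\geq(1-\delta)n/48$. Taking $c=\min\{c_1,(1-\delta)/48\}$ completes the contradiction, and the Minimum Degree win follows because $d_M(v)+d_B(v)=n-1$ at the end of play forces $d_M(v)\geq 16$ for $n>16/\delta$. The main technical obstacle is the justification of the lemma application: a vertex can leave $C_i$ because Maker simply stops re-addressing it, without its $d_M$ having reached $16$, which does not literally fit the lemma's ``delete only when $t=16$'' rule. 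The fix is to observe that additional early removals only lower the final $a'$, so Lemma~\ref{happybutannoying}'s conclusion remains a legitimate upper bound on our process even with these spontaneous exits from $C_i$.
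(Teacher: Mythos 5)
Your proposal follows essentially the same route as the paper's proof: you inherit the $C_i$/$a(i)$ machinery from Lemma~\ref{md-match}, split on $|C|$ versus $b$, bound the $|C_i|>b$ regime by the harmonic-sum estimate $a(i+1)\leq a(i)+2b/|C_i|$, and invoke Lemma~\ref{happybutannoying} with $c=b$ to handle the tail, arriving at the same kind of linear threshold (your constants $18b$ and $(1-\delta)/48$ differ from the paper's $18b+16$ and $(1-\delta)/49$ only by harmless bookkeeping). One thing worth noting: you explicitly flag the mismatch between the game's ``vertex leaves $C_i$ when Maker stops re-addressing it'' and the lemma's ``delete only when $t=16$'' rule and sketch a monotonicity fix; the paper's proof applies Lemma~\ref{happybutannoying} without comment on this point, so your version is if anything a touch more careful, though the asserted fix (``early removals only lower $a'$'') would still need a short argument to be airtight.
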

\begin{proof}
We maintain the same definitions as in Lemma~\ref{md-match}.

\emph{Case 1}: $|C|>b$

For $|C_i|>b$, if $i$ is such that $|C_{i+1}|<|C_i|$, then Maker does not increase $a(i)$ while Breaker increases $a(i)$ by at most $\frac{2b}{|C_i|}$, and so we may bound by $a(i+1)\leq a(i)+\frac{2b}{|C_i|}$. Let $i$ be the smallest positive integer such that $|C_i|\leq b$. We apply the previous lemma to get that $a(g)-a(i)\leq 18b+16$ and so we have $$
0\geq (1-\delta)n-30b-2b(\ln n-\ln b)-18b-16$$
which implies $b\geq cn$ where $c$ is the unique positive solution in range $(0,1)$ to $49c-2c\ln c=1-\delta$.

\emph{Case 2}: $|C|<b$

We may directly apply the lemma to get $a(g)-a(0)\leq b+17g+16\leq 18b+16$ and so

$$0\geq (1-\delta)n-30b-18b-16$$
which implies $b\geq\frac{1-\delta}{49}n$.

\end{proof}
We may then finish the proof of Theorem~\ref{ham-star} exactly as in Theorem~\ref{ham-match}.

\section{Open Problems}
Other than closing the gaps in our theorems, there remain several interesting problems and directions that need future work:
\begin{enumerate}
\item There has been extensive work on establishing threshold biases for classical Maker-Breaker games on boards other than $K_n$ (See~\cite{hefetz2014}). It is thus interesting to ask about threshold biases for structure-biased Maker-Breaker games on different boards;
\item Different structures could lead to distinct threshold biases, which in turn require inherently different Breaker strategies. Thus it is interesting to study the threshold biases under other structures as well; in particular, our proofs for Theorem~\ref{con-match} and Theorem~\ref{ham-match} could be generalized to structures with constant degree, and so the definition for the structure biases can be relaxed to a wider family.

\end{enumerate}

\vspace{1cm}

\end{document}